\begin{document}
%
\newtheorem{theorem}{Theorem}
\newtheorem{lemma}[theorem]{Lemma}
\newtheorem{example}[theorem]{Example}
\newtheorem{algol}{Algorithm}
\newtheorem{corollary}[theorem]{Corollary}
\newtheorem{prop}[theorem]{Proposition}
\newtheorem{proposition}[theorem]{Proposition}
\newtheorem{problem}[theorem]{Problem}
\newtheorem{conj}[theorem]{Conjecture}

\theoremstyle{remark}
\newtheorem{definition}[theorem]{Definition}
\newtheorem{question}[theorem]{Question}
\newtheorem{remark}[theorem]{Remark}
\newtheorem*{acknowledgement}{Acknowledgements}

\newtheorem*{Thm*}{Theorem}
\newtheorem{Thm}{Theorem}[section]
\renewcommand*{\theThm}{\Alph{Thm}}

\numberwithin{equation}{section}
\numberwithin{theorem}{section}
\numberwithin{table}{section}
\numberwithin{figure}{section}

\allowdisplaybreaks

\definecolor{olive}{rgb}{0.3, 0.4, .1}
\definecolor{dgreen}{rgb}{0.,0.5,0.}

\def\cc#1{\textcolor{red}{#1}} 

\definecolor{dgreen}{rgb}{0.,0.6,0.}
\def\tgreen#1{\begin{color}{dgreen}{\it{#1}}\end{color}}
\def\tblue#1{\begin{color}{blue}{\it{#1}}\end{color}}
\def\tred#1{\begin{color}{red}#1\end{color}}
\def\tmagenta#1{\begin{color}{magenta}{\it{#1}}\end{color}}
\def\tNavyBlue#1{\begin{color}{NavyBlue}{\it{#1}}\end{color}}
\def\tMaroon#1{\begin{color}{Maroon}{\it{#1}}\end{color}}

\def\ccr#1{\textcolor{red}{#1}}
\def\ccm#1{\textcolor{magenta}{#1}}
\def\cco#1{\textcolor{orange}{#1}}

%


 \def\mand{\qquad\mbox{and}\qquad}

\def\cA{{\mathcal A}}
\def\cB{{\mathcal B}}
\def\cC{{\mathcal C}}
\def\cS{{\mathcal D}}
\def\cH{{\mathcal H}}
\def\cI{{\mathcal I}}
\def\cJ{{\mathcal J}}
\def\cK{{\mathcal K}}
\def\cL{{\mathcal L}}
\def\cM{{\mathcal M}}
\def\cN{{\mathcal N}}
\def\cO{{\mathcal O}}
\def\cP{{\mathcal P}}
\def\cQ{{\mathcal Q}}
\def\cR{{\mathcal R}}
\def\cS{{\mathcal S}}
\def\cT{{\mathcal T}}
\def\cU{{\mathcal U}}
\def\cV{{\mathcal V}}
\def\cW{{\mathcal W}}
\def\cX{{\mathcal X}}
\def\cY{{\mathcal Y}}
\def\cZ{{\mathcal Z}}

\def\C{\mathbb{C}}
\def\F{\mathbb{F}}
\def\K{\mathbb{K}}
\def\Z{\mathbb{Z}}
\def\R{\mathbb{R}}
\def\Q{\mathbb{Q}}
\def\N{\mathbb{N}}
\def\M{\mathrm{M}}
\def\L{\mathbb{L}}
\def\M{{\normalfont\textsf{M}}} 
\def\U{\mathbb{U}}
\def\P{\mathbb{P}}
\def\A{\mathbb{A}}
\def\fp{\mathfrak{p}}
\def\fq{\mathfrak{q}}
\def\n{\mathfrak{n}}
\def\X{\mathcal{X}}
\def\x{\textrm{\bf x}}
\def\w{\textrm{\bf w}}
\def\ovQ{\overline{\Q}}
\def \Kab{\K^{\mathrm{ab}}}
\def \Qab{\Q^{\mathrm{ab}}}
\def \Qtr{\Q^{\mathrm{tr}}}
\def \Kc{\K^{\mathrm{c}}}
\def \Qc{\Q^{\mathrm{c}}}
\def\ZK{\Z_\K}
\def\ZKS{\Z_{\K,\cS}}
\def\ZKSf{\Z_{\K,\cS_{f}}}
\def\RSf{R_{\cS_{f}}}
\def\RTf{R_{\cT_{f}}}
\def \wH{{\mathrm H}}

\def\S{\mathcal{S}}
\def\vec#1{\mathbf{#1}}
\def\ov#1{{\overline{#1}}}
\def\sign{{\operatorname{sign}}}
\def\Gm{\G_{\textup{m}}}
\def\fA{{\mathfrak A}}
\def\fB{{\mathfrak B}}

\def \GL{\mathrm{GL}}
\def \Mat{\mathrm{Mat}}

\def\house#1{{%
    \setbox0=\hbox{$#1$}
    \vrule height \dimexpr\ht0+1.4pt width .5pt depth \dp0\relax
    \vrule height \dimexpr\ht0+1.4pt width \dimexpr\wd0+2pt depth \dimexpr-\ht0-1pt\relax
    \llap{$#1$\kern1pt}
    \vrule height \dimexpr\ht0+1.4pt width .5pt depth \dp0\relax}}


\newenvironment{notation}[0]{%
  \begin{list}%
    {}%
    {\setlength{\itemindent}{0pt}
     \setlength{\labelwidth}{1\parindent}
     \setlength{\labelsep}{\parindent}
     \setlength{\leftmargin}{2\parindent}
     \setlength{\itemsep}{0pt}
     }%
   }%
  {\end{list}}

\newenvironment{parts}[0]{%
  \begin{list}{}%
    {\setlength{\itemindent}{0pt}
     \setlength{\labelwidth}{1.5\parindent}
     \setlength{\labelsep}{.5\parindent}
     \setlength{\leftmargin}{2\parindent}
     \setlength{\itemsep}{0pt}
     }%
   }%
  {\end{list}}
\newcommand{\Part}[1]{\item[\upshape#1]}

\def\Case#1#2{%
\smallskip\paragraph{\textbf{\boldmath Case #1: #2.}}\hfil\break\ignorespaces}

\def\Subcase#1#2{%
\smallskip\paragraph{\textit{\boldmath Subcase #1: #2.}}\hfil\break\ignorespaces}

\renewcommand{\a}{\alpha}
\renewcommand{\b}{\beta}
\newcommand{\g}{\gamma}
\newcommand{\bnu}{\bm{\nu}}
\newcommand{\bk}{\bm{k}}
\renewcommand{\d}{\Delta}
\newcommand{\e}{\epsilon}
\newcommand{\f}{\varphi}
\newcommand{\fhat}{\hat\varphi}
\newcommand{\bfphi}{{\boldsymbol{\f}}}
\renewcommand{\l}{\lambda}
\renewcommand{\k}{\kappa}
\newcommand{\lhat}{\hat\lambda}
\newcommand{\bfmu}{{\boldsymbol{\mu}}}
\renewcommand{\o}{\omega}
\renewcommand{\r}{\rho}
\newcommand{\rbar}{{\bar\rho}}
\newcommand{\s}{\sigma}
\newcommand{\sbar}{{\bar\sigma}}
\renewcommand{\t}{\tau}
\newcommand{\z}{\zeta}


\newcommand{\ga}{{\mathfrak{a}}}
\newcommand{\gb}{{\mathfrak{b}}}
\newcommand{\gn}{{\mathfrak{n}}}
\newcommand{\gp}{{\mathfrak{p}}}
\newcommand{\gP}{{\mathfrak{P}}}
\newcommand{\gq}{{\mathfrak{q}}}
\newcommand{\h}{{\mathfrak{h}}}
\newcommand{\Abar}{{\bar A}}
\newcommand{\Ebar}{{\bar E}}
\newcommand{\kbar}{{\bar k}}
\newcommand{\Kbar}{{\bar K}}
\newcommand{\Pbar}{{\bar P}}
\newcommand{\Sbar}{{\bar S}}
\newcommand{\Tbar}{{\bar T}}
\newcommand{\gbar}{{\bar\gamma}}
\newcommand{\lbar}{{\bar\lambda}}
\newcommand{\ybar}{{\bar y}}
\newcommand{\phibar}{{\bar\f}}

\newcommand{\Acal}{{\mathcal A}}
\newcommand{\Bcal}{{\mathcal B}}
\newcommand{\Ccal}{{\mathcal C}}
\newcommand{\Dcal}{{\mathcal D}}
\newcommand{\Ecal}{{\mathcal E}}
\newcommand{\Fcal}{{\mathcal F}}
\newcommand{\Gcal}{{\mathcal G}}
\newcommand{\Hcal}{{\mathcal H}}
\newcommand{\Ical}{{\mathcal I}}
\newcommand{\Jcal}{{\mathcal J}}
\newcommand{\Kcal}{{\mathcal K}}
\newcommand{\Lcal}{{\mathcal L}}
\newcommand{\Mcal}{{\mathcal M}}
\newcommand{\Ncal}{{\mathcal N}}
\newcommand{\Ocal}{{\mathcal O}}
\newcommand{\Pcal}{{\mathcal P}}
\newcommand{\Qcal}{{\mathcal Q}}
\newcommand{\Rcal}{{\mathcal R}}
\newcommand{\Scal}{{\mathcal S}}
\newcommand{\Tcal}{{\mathcal T}}
\newcommand{\Ucal}{{\mathcal U}}
\newcommand{\Vcal}{{\mathcal V}}
\newcommand{\Wcal}{{\mathcal W}}
\newcommand{\Xcal}{{\mathcal X}}
\newcommand{\Ycal}{{\mathcal Y}}
\newcommand{\Zcal}{{\mathcal Z}}

\renewcommand{\AA}{\mathbb{A}}
\newcommand{\BB}{\mathbb{B}}
\newcommand{\CC}{\mathbb{C}}
\newcommand{\FF}{\mathbb{F}}
\newcommand{\G}{\mathbb{G}}
\newcommand{\KK}{\mathbb{K}}
\newcommand{\NN}{\mathbb{N}}
\newcommand{\PP}{\mathbb{P}}
\newcommand{\QQ}{\mathbb{Q}}
\newcommand{\RR}{\mathbb{R}}
\newcommand{\ZZ}{\mathbb{Z}}

\newcommand{\bfa}{{\boldsymbol a}}
\newcommand{\bfb}{{\boldsymbol b}}
\newcommand{\bfc}{{\boldsymbol c}}
\newcommand{\bfd}{{\boldsymbol d}}
\newcommand{\bfe}{{\boldsymbol e}}
\newcommand{\bff}{{\boldsymbol f}}
\newcommand{\bfg}{{\boldsymbol g}}
\newcommand{\bfi}{{\boldsymbol i}}
\newcommand{\bfj}{{\boldsymbol j}}
\newcommand{\bfk}{{\boldsymbol k}}
\newcommand{\bfm}{{\boldsymbol m}}
\newcommand{\bfp}{{\boldsymbol p}}
\newcommand{\bfr}{{\boldsymbol r}}
\newcommand{\bfs}{{\boldsymbol s}}
\newcommand{\bft}{{\boldsymbol t}}
\newcommand{\bfu}{{\boldsymbol u}}
\newcommand{\bfv}{{\boldsymbol v}}
\newcommand{\bfw}{{\boldsymbol w}}
\newcommand{\bfx}{{\boldsymbol x}}
\newcommand{\bfy}{{\boldsymbol y}}
\newcommand{\bfz}{{\boldsymbol z}}
\newcommand{\bfA}{{\boldsymbol A}}
\newcommand{\bfF}{{\boldsymbol F}}
\newcommand{\bfB}{{\boldsymbol B}}
\newcommand{\bfD}{{\boldsymbol D}}
\newcommand{\bfG}{{\boldsymbol G}}
\newcommand{\bfI}{{\boldsymbol I}}
\newcommand{\bfM}{{\boldsymbol M}}
\newcommand{\bfP}{{\boldsymbol P}}
\newcommand{\bfX}{{\boldsymbol X}}
\newcommand{\bfY}{{\boldsymbol Y}}
\newcommand{\bfzero}{{\boldsymbol{0}}}
\newcommand{\bfone}{{\boldsymbol{1}}}

\newcommand{\aff}{{\textup{aff}}}
\newcommand{\Aut}{\operatorname{Aut}}
\newcommand{\Berk}{{\textup{Berk}}}
\newcommand{\Birat}{\operatorname{Birat}}
\newcommand{\characteristic}{\operatorname{char}}
\newcommand{\codim}{\operatorname{codim}}
\newcommand{\Crit}{\operatorname{Crit}}
\newcommand{\critwt}{\operatorname{critwt}} 
\newcommand{\cond}{\operatorname{cond}}
\newcommand{\Cycle}{\operatorname{Cycles}}
\newcommand{\diag}{\operatorname{diag}}
\newcommand{\diam}{\operatorname{diam}}
\newcommand{\Disc}{\operatorname{Disc}}
\newcommand{\Div}{\operatorname{Div}}
\newcommand{\Dom}{\operatorname{Dom}}
\newcommand{\End}{\operatorname{End}}
\newcommand{\ExtOrbit}{\mathcal{EO}} 
\newcommand{\Fbar}{{\bar{F}}}
\newcommand{\Fix}{\operatorname{Fix}}
\newcommand{\FOD}{\operatorname{FOD}}
\newcommand{\FOM}{\operatorname{FOM}}
\newcommand{\Gal}{\operatorname{Gal}}
\newcommand{\genus}{\operatorname{genus}}
\newcommand{\grac}{\operatorname{grac}}
\newcommand{\GITQuot}{/\!/}
\newcommand{\GR}{\operatorname{\mathcal{G\!R}}}
\newcommand{\Hom}{\operatorname{Hom}}
\newcommand{\Index}{\operatorname{Index}}
\newcommand{\Image}{\operatorname{Image}}
\newcommand{\Isom}{\operatorname{Isom}}
\newcommand{\hhat}{{\hat h}}
\newcommand{\Ker}{{\operatorname{ker}}}
\newcommand{\Ksep}{K^{\textup{sep}}}  
\newcommand{\lcm}{{\operatorname{lcm}}}
\newcommand{\LCM}{{\operatorname{LCM}}}
\newcommand{\Lift}{\operatorname{Lift}}
\newcommand{\limstar}{\lim\nolimits^*}
\newcommand{\limstarn}{\lim_{\hidewidth n\to\infty\hidewidth}{\!}^*{\,}}
\newcommand{\llog}{\log\log}
\newcommand{\logplus}{\log^{\scriptscriptstyle+}}
\newcommand{\maxplus}{\operatornamewithlimits{\textup{max}^{\scriptscriptstyle+}}}
\newcommand{\MOD}[1]{~(\textup{mod}~#1)}
\newcommand{\Mor}{\operatorname{Mor}}
\newcommand{\Moduli}{\mathcal{M}}
\newcommand{\Norm}{{\operatorname{\mathsf{N}}}}
\newcommand{\notdivide}{\nmid}
\newcommand{\normalsubgroup}{\triangleleft}
\newcommand{\NS}{\operatorname{NS}}
\newcommand{\onto}{\twoheadrightarrow}
\newcommand{\ord}{\operatorname{ord}}
\newcommand{\Orbit}{\mathcal{O}}
\newcommand{\Per}{\operatorname{Per}}
\newcommand{\Perp}{\operatorname{Perp}}
\newcommand{\PrePer}{\operatorname{PrePer}}
\newcommand{\PGL}{\operatorname{PGL}}
\newcommand{\Pic}{\operatorname{Pic}}
\newcommand{\Prob}{\operatorname{Prob}}
\newcommand{\Proj}{\operatorname{Proj}}
\newcommand{\Qbar}{{\bar{\QQ}}}
\newcommand{\rank}{\operatorname{rank}}
\newcommand{\rad}{\operatorname{rad}}
\newcommand{\Rat}{\operatorname{Rat}}
\newcommand{\Res}{{\operatorname{Res}}}
\newcommand{\Resultant}{\operatorname{Res}}
\renewcommand{\setminus}{\smallsetminus}
\newcommand{\sgn}{\operatorname{sgn}}
\newcommand{\SL}{\operatorname{SL}}
\newcommand{\Span}{\operatorname{Span}}
\newcommand{\Spec}{\operatorname{Spec}}
\renewcommand{\ss}{{\textup{ss}}}
\newcommand{\stab}{{\textup{stab}}}
\newcommand{\Stab}{\operatorname{Stab}}
\newcommand{\Support}{\operatorname{Supp}}
\newcommand{\Sym}{\operatorname{Sym}}  
\newcommand{\tors}{{\textup{tors}}}
\newcommand{\Trace}{\operatorname{Trace}}
\newcommand{\trianglebin}{\mathbin{\triangle}} 
\newcommand{\tr}{{\textup{tr}}} 
\newcommand{\UHP}{{\mathfrak{h}}}    
\newcommand{\Wander}{\operatorname{Wander}}
\newcommand{\<}{\langle}
\renewcommand{\>}{\rangle}

\newcommand{\pmodintext}[1]{~\textup{(mod}~#1\textup{)}}
\newcommand{\ds}{\displaystyle}
\newcommand{\longhookrightarrow}{\lhook\joinrel\longrightarrow}
\newcommand{\longonto}{\relbar\joinrel\twoheadrightarrow}
\newcommand{\SmallMatrix}[1]{%
  \left(\begin{smallmatrix} #1 \end{smallmatrix}\right)}
  
  \def\({\left(}
\def\){\right)}

\hypersetup{breaklinks=true}


\title[Odd graceful coloring of graphs]
{Upper bounds on the odd graceful chromatic number of graphs}
\author[M. Afifurrahman, F. F. Hadiputra]{Muhammad Afifurrahman, Fawwaz Fakhrurrozi Hadiputra}

\address[MA]{School of Mathematics and Statistics, University of New South Wales, Sydney NSW 2052, Australia}
\email{m.afifurrahman@unsw.edu.au}
\address[FFH]{School of Mathematics and Statistics, The University of Melbourne, Parkville, VIC 3010, Australia}
\email{fhadiputra@student.unimelb.edu.au}
\subjclass[2020]{05C78 (primary), 05C15 (secondary)}

\keywords{graph labeling, odd graceful labeling, graceful coloring, odd graceful coloring} 
\thanks{}
\begin{abstract}We obtain several new upper bounds of the odd graceful chromatic number of a graph $G$, which must be bipartite. Some of our bounds depend only on the number of the vertices of $G$ or the chromatic number of some graphs related to the bipartition of $G$. 
\end{abstract}

\maketitle

\tableofcontents
\section{Introduction}\label{sec:intro} 

\subsection{Odd graceful coloring}

For a graph $G$, a graph labeling on $G$ is a map that assigns graph elements on $G$ to elements of another set, usually positive integers. There has been a plethora of research on various graph labelings defined on a graph $G$; a dynamic survey is available at~\cite{Gal}.

In this work, we consider a graph labeling problem on a simple graph $G$ (that is connected, unless stated otherwise) over the vertex set $V(G)$. In particular, we work on the problem on odd graceful colorings of a graph $G$. This labeling was first introduced by Suparta et. al.~\cite{SLHB} as a generalization of odd graceful labeling and graceful coloring of graphs, which in turns generalizes the notion of graceful labelings.

We first recall the definitions. For a graph $G$ and a positive integer $k$, consider a vertex labeling $
    \lambda \colon V(G) \to \Z.$ We extend the labeling $\lambda$ to an edge labeling $\lambda'\colon E(G)\to \Z$ by defining \begin{align*}
    \lambda'(uv)=|\lambda(u)-\lambda(v)|\:\text{if}\: uv\in E(G).
\end{align*} 

The main work of this paper considers the \textit{odd graceful coloring},
defined as follows from~\cite{SLHB}. First, we define a graceful $k$-coloring, first suggested by Chartrand and introduced at~\cite{BBELZ}, as a vertex labeling $\lambda \colon V(G) \to \{0,1,\dots, k\}$ such that $\lambda$ induces a vertex coloring on $G$ (that is, no two adjacent vertices are of the same color).

Next, we define a graceful $k$-coloring $\lambda$ on a graph $G$ as a \textit{$k$-odd graceful coloring} if the value of the induced edge labeling $\lambda'$ is always odd for any edge in $G$. Similar to the previous concept, the \textit{odd graceful chromatic number} of $G$, $\chi_{og}(G)$ is defined as the smallest $k$ such that $G$ has a $k$-odd graceful coloring. If no such $k$ exists, we denote $\chi_{og}(G)=\infty$.
 Note that a similar but stronger concept, odd-graceful total coloring, is introduced at~\cite{SSY}, where the colorings are considered over the vertices and edges of $G$.

In their paper, Suparta \textit{et al.}~\cite{SLHB} gave some lower bounds of $\chi_{og}(G)$ for general graphs $G$, and calculate its exact value for the families of cycles, paths, some families of caterpillars, some generalized star graphs, ladder and some prism graphs. In addition, they also proved~\cite[Theorem~2]{SLHB} that any non-bipartite graph does not admit an odd graceful coloring.

In this paper, we provide several new upper bounds of $\chi_{og}(G)$ for general graphs $G$, with main results being Theorems~\ref{thm:upper_bound} and~\ref{thm:main-upper-bound}. In particular, we prove that an odd graceful total coloring of $G$ always exists for any bipartite graph $G$, thereby complementing~\cite[Theorem~2]{SLHB}. In addition, we also compute the odd graceful chromatic number of complete bipartite graphs and near-complete bipartite graphs, as seen in Theorems~\ref{thm:complete_bipartite} and~\ref{thm:near-complete}.

\subsection{Related graph labelings}
This section concerns some family of graph labelings $G$ which are related to the odd graph labelings.

A labeling $\lambda$ is \textit{graceful} if the codomain of $\lambda$ is $\{0,1,\dots,|E(G)|\}$ and the range of the induced edge labeling $\lambda'$ is exactly $\{1,\dots,|E(G)|\}$. In particular, not all graph has a graceful labeling. There have been plethora of works in graceful labelings and their variations, as seen at~\cite[Chapters 1, 2 and 3]{Gal}. We will discuss some of them in this section.

The first variant, the \textit{gracefulness} of a graph, introduced at~\cite{CL} and denoted as $\grac G$, is defined as the smallest integer $k$ such that there exist an injective vertex labeling $\lambda \colon V(G) \to \{0,1,\dots, k\}$ such that the associated edge labeling $\lambda'$ is injective. This parameter can be seen as a generalisation for a graceful labeling that holds for all graphs.

Another variant, the \textit{odd graceful labeling}, was said to be first introduced by Gnanajothi~\cite{Gnana} in their PhD thesis. However, the authors have not been able to verify this claim independently. A labeling $\lambda$ is odd graceful if the codomains of $\lambda$ and $\lambda'$ are $\{0,1,\dots,2|E(G)|-1\}$ and $\{1,3,\dots,2|E(G)|-1\}$, respectively.  References of the state of the art for this labeling are available at~\cite[Section~3.6]{Gal}. Note that any graph with an odd graceful labeling must be bipartite, which is said to be sufficient by Gnanajothi~\cite{Gnana}.

The next variant, the \textit{graceful coloring}, was first suggested by Chartrand and introduced at~\cite{BBELZ}, combines the concepts of gracefulness of $G$ and proper coloring of a graph as follows. For a positive integer $k$, define a \textit{graceful $k$-coloring} as a vertex labeling $\lambda \colon V(G) \to \{0,1,\dots, k\}$ such that $\lambda$ induces a vertex coloring on $G$ (that is, no two adjacent vertices are of the same color) and $\lambda$ also induces an edge coloring $\lambda'$ on $G$, where $\lambda'(uv)=|\lambda(u)-\lambda(v)|$ for all $uv\in E(G)$. Then, define the \textit{graceful chromatic number} of $G$, $\chi_g(G)$, as the smallest $k$ such that $G$ has a graceful $k$-coloring.

With respect to the odd graceful labeling, we first note that the concept of graceful coloring combines the concepts of gracefulness of $G$ and proper coloring of a graph. Then, the concept of odd graceful coloring combines graceful coloring with odd graceful labeling.

\subsection{Overview of results and proofs}
The structure of the paper goes as follows. Section~\ref{sec:intro} provides background on the odd graceful labelings, related graphs labelings and notations. Section~\ref{sec:chromatic} is devoted for results in upper bounds of $\chi_{og}(G)$ with respect to the chromatic number of a related graph. Section~\ref{sec:near-complete} discuss a new upper bound of $\chi_{og}(G)$ with respect to the number of vertices $|V(G)|$. In addition, this section also discusses the odd graceful chromatic number of the families of complete and near-complete bipartite graphs.

We provide some preliminary results in Section~\ref{sec:pres}, which are used to prove the results of Sections~\ref{sec:chromatic} and~\ref{sec:near-complete} in Sections~\ref{sec:chromatic-p} and~\ref{sec:near-complete-p}, respectively.

Most of our upper bounds arguments rely on constructing odd graceful labelings of a bipartite graph $G$ to satisfy the upper bounds we want to show. In the case of Theorem~\ref{thm:D} and Theorem~\ref{thm:D2}, we apply the well-known Brooks' theorem (see~\cite{Brooks}) on chromatic numbers to further improve the related bound.

To prove lower bounds (and exact calculations) of the graphs in Theorems~\ref{thm:complete_bipartite}-\ref{thm:near-complete}, we use several other arguments and simplifications. The main simplification comes from Lemma~\ref{lem:subgraph}, which allows us to consider a particular class of graphs. We further use tool from additive combinatorics, where the related additive structure arises from the relation stated in Lemma~\ref{lem:abcgraceful}. For example, Lemma~\ref{lem:sumset} is used to enforce a lower bound on the labeling of a complete bipartite graph $K_{m,n}$. These results would then be used to give a better upper bound for $\chi_{og}(G)$, as seen at Theorem~\ref{thm:main-upper-bound}.

\subsection{Definitions and notations}\label{sec:def}
We denote $K_{n}$ as the complete graph of $n$ vertices, and $K_{m,n}$ as the complete bipartite graph which can be partitioned to sets of $m$ and $n$ vertices. A circulant graph $Ci_n(s_1,s_2,\ldots,s_k)$ is a graph obtained by a vertex set
\begin{align*}
    V(Ci_n(s_1,s_2,\ldots,s_k)) = \{u_i \mid i \in [1,n]\}
\end{align*}
and an edge set
\begin{align*}
    E(Ci_n(s_1,s_2,\ldots,s_k)) = \{u_iu_{i+s_j} \mid i \in [1,n], j \in [1,k]\}
\end{align*}
where the index $i$ taken modulo $n$. A special case of this family of graph is the Möbius ladder graph $M_{2n}$, with $M_{2n} \cong Ci_{2n}(1,n)$.

We also recall a graph operation $G^2$ on a graph $G$, defined in~\cite{Die} for example. The graph $G^2$ is defined as a graph that satisfies
\begin{align*}
    V(G^2) & \coloneqq V(G), \quad E(G^2) \coloneqq \{uv \mid u,v \in V(G), d(u,v) \le 2\}.
\end{align*}
In other words, $G^2$ is a graph obtained by adjoining vertices in $G$ whose distance are exactly two. For example, since every two vertices in the complete bipartite graph $K_{m,n}$ has distance at most two, we have $(K_{m,n})^2 = K_{m+n}$. With respect to this operation, we define $G^2[U]$ as the subgraph of $G^2$ induced by the vertices of $U$. 

For a bipartite graph $G$ such that $V(G)$ can be partitioned to two sets of vertices $U$ and $W$, we denote these two sets as the \textit{bipartition} of the graph $G$.

For an additive group $\mathbf{Z}$ and subsets $A,B\subseteq \mathbf{Z}$, we define the sumsets $A+B$ as the set \begin{align*}
    A+B\coloneqq \{ a+b\colon a\in A,b\in B\}.
\end{align*} However, we use the notation $2\cdot A=\{2a \colon a\in A\}$.

We recall that $\chi(G)$ is the chromatic number of the graph $G$ and $\Delta(G)$ is the largest degree of the vertices of $G$.

\section{New results}

\subsection{Upper bounds for general graphs}\label{sec:chromatic}
We prove several upper bounds on the value of $\chi_{og}(G)$, for arbitrary graph $G$. It is known from~\cite[Corollary~2]{SLHB} that if $G$ is not bipartite, then $\chi_{og}(G)=\infty$. Therefore, we can let $G$ be a bipartite graph in this work.

We are now ready to state our main result of this section, as follows.
\begin{theorem}\label{thm:upper_bound}
Let $G$ be a bipartite graph with its bipartition $U$ and $W$. Then, 
\begin{align*}
    \chi_{og}(G) \le 2(\chi(G^2[U]) + \chi(G^2[W]) - 1).
\end{align*} 
\end{theorem}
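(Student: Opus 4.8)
The plan is to construct an explicit odd graceful coloring $\lambda$ of $G$ using proper colorings of the two auxiliary graphs $G^2[U]$ and $G^2[W]$. Set $p = \chi(G^2[U])$ and $q = \chi(G^2[W])$. Fix a proper coloring $c_U \colon U \to \{0, 1, \dots, p-1\}$ of $G^2[U]$ and a proper coloring $c_W \colon W \to \{1, 2, \dots, q\}$ of $G^2[W]$; note the deliberate offset so that the color classes of $U$ and $W$ occupy disjoint ranges. The key point in choosing $G^2$ rather than $G$ is that two vertices $u, u' \in U$ receive the same $c_U$-value only if their distance in $G$ is at least $3$; since the endpoints of the two edges incident to any common neighbor lie within distance $2$, this will be exactly what is needed to separate edge labels.

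Next I would define $\lambda$ so that vertices of $U$ are assigned \emph{even} integers and vertices of $W$ are assigned \emph{odd} integers; this immediately forces $\lambda'(uv) = |\lambda(u) - \lambda(v)|$ to be odd on every edge $uv$ (recall $G$ is bipartite with parts $U$ and $W$), so the odd-difference condition is automatic. Concretely, put $\lambda(u) = 2\,c_U(u)$ for $u \in U$ and $\lambda(w) = 2\,c_W(w) - 1$ for $w \in W$. Then every label lies in $\{0, 1, \dots, 2(p+q) - 1\}$; after checking the exact endpoints one gets the bound $k \le 2(p+q-1)$, matching the statement. It remains to verify the two coloring conditions: that $\lambda$ is a proper vertex coloring of $G$, and that $\lambda'$ is a proper edge coloring of $G$.

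For the vertex condition: adjacent vertices lie in different parts, hence one gets an even and an odd label, so $\lambda(u) \ne \lambda(w)$. For the edge condition, suppose two edges $e_1, e_2$ share a vertex $x$. If $x \in U$ then $e_1 = x w_1$, $e_2 = x w_2$ with $w_1, w_2 \in W$ and $w_1 \ne w_2$; since $d(w_1, w_2) \le 2$, they are adjacent in $G^2[W]$, so $c_W(w_1) \ne c_W(w_2)$, giving $\lambda(w_1) \ne \lambda(w_2)$ and therefore $|\lambda(x) - \lambda(w_1)| \ne |\lambda(x) - \lambda(w_2)|$ because both labels lie on the same side of $\lambda(x)$ — here I must be slightly careful: since all of $U$ gets even labels and all of $W$ gets odd labels, and more usefully since $\min_W \lambda > $ is not guaranteed above $\max_U \lambda$, I instead note that $\lambda(w_1), \lambda(w_2)$ are both odd while $\lambda(x)$ is even, and $\lambda(w_1) \ne \lambda(w_2)$ with the map $t \mapsto |t - \lambda(x)|$ injective on the odd integers only after ensuring the two labels lie on one side of $\lambda(x)$; this is where the offset ranges $\{0,\dots,2p-2\}$ for $U$ and $\{1,\dots,2q-1\}$ for $W$ must be chosen so that in fact \emph{every} $U$-label is strictly below \emph{every} $W$-label, i.e. $2(p-1) < 1$ fails, so the correct fix is to shift $W$ up: set $\lambda(w) = 2\,c_W(w) - 1 + 2p$, landing labels in $\{2p+1, \dots, 2p + 2q - 1\}$, all above the $U$-labels $\{0, 2, \dots, 2p-2\}$. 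Then $t \mapsto |t - \lambda(x)|$ is strictly monotincreasing on all labels actually used on the opposite side, hence injective, and the edge condition follows; the symmetric case $x \in W$ is identical.

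The main obstacle I anticipate is precisely this bookkeeping of which offset to use so that the label map is order-preserving on each side while still fitting inside $\{0, \dots, 2(p+q-1)\}$; with $\lambda(U) \subseteq \{0, 2, \dots, 2p-2\}$ and $\lambda(W) \subseteq \{2p+1, \dots, 2p+2q-1\}$ the maximum used label is $2p + 2q - 1 = 2(p+q-1) + 1$, which is off by one, so a final small adjustment (for instance using $\{1, 3, \dots, 2p-1\}$ on one side and $\{2, 4, \dots, 2q\}$ on the other, or absorbing the parity of $c_W$ differently) is needed to hit $2(p+q-1)$ exactly. Once the ranges are pinned down, everything else is a direct check, and the theorem follows.
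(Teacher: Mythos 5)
Your construction is essentially the paper's: give the two parts labels of opposite parity so every edge difference is automatically odd, use proper colorings of $G^2[U]$ and $G^2[W]$ to force distinct labels on vertices at distance two within a part, and separate the two label ranges so that $t\mapsto|t-\lambda(x)|$ is monotone on the labels used on the opposite side; your verification of the vertex and edge conditions is the same as the paper's. The only loose end is the off-by-one you yourself flag: with $\lambda(U)\subseteq\{0,2,\dots,2p-2\}$ and $\lambda(W)\subseteq\{2p+1,\dots,2p+2q-1\}$ the largest label is $2(p+q-1)+1$, exceeding the claimed bound. The paper's offsets close this gap: take proper colorings $c_U\colon U\to\{1,\dots,p\}$ and $c_W\colon W\to\{1,\dots,q\}$ and set $\lambda(u)=2c_U(u)-1$, $\lambda(w)=2\bigl(c_W(w)+p-1\bigr)$, so the $U$-labels are the odd numbers in $[1,2p-1]$, the $W$-labels are the even numbers in $[2p,\,2p+2q-2]$, every $U$-label lies below every $W$-label, and the maximum label is exactly $2(p+q-1)$. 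This choice also avoids using the label $0$, which the paper's later lower-bound arguments implicitly exclude; with that adjustment your argument is complete and coincides with the paper's proof.
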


We now discuss some corollaries of Theorem~\ref{thm:upper_bound}. First, this theorem implies the existence of an odd graceful coloring for any bipartite graph. This complements~\cite[Corollary~2]{SLHB}, which shows that any graph with an odd graceful coloring must be bipartite.
\begin{corollary}
A graph $G$ admits an odd graceful coloring if and only if $G$ is bipartite.
\end{corollary}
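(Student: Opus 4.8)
The plan is to derive the corollary directly from the two results just assembled. For one direction, recall that \cite[Corollary~2]{SLHB} already gives that any graph admitting an odd graceful coloring must be bipartite, so that implication requires no further work. For the converse, suppose $G$ is bipartite with bipartition $U$ and $W$. Then $G^2[U]$ and $G^2[W]$ are finite simple graphs, hence have finite chromatic numbers, so the right-hand side $2(\chi(G^2[U]) + \chi(G^2[W]) - 1)$ of Theorem~\ref{thm:upper_bound} is a finite positive integer. By that theorem, $\chi_{og}(G)$ is at most this finite quantity, and in particular $\chi_{og}(G) \neq \infty$, which by definition means $G$ admits a $k$-odd graceful coloring for some $k$.

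The only subtlety to address is the role of connectedness: the paper's standing convention (Section~\ref{sec:intro}) is that $G$ is connected unless stated otherwise, whereas the corollary is phrased for an arbitrary graph $G$. I would handle this in one of two ways. The cleanest is to note that Theorem~\ref{thm:upper_bound} and its proof do not actually use connectedness — a bipartition exists for any bipartite graph, and the chromatic-number bound goes through verbatim — so the argument above applies as stated. Alternatively, if one prefers to keep Theorem~\ref{thm:upper_bound} read only for connected $G$, one observes that a disconnected bipartite graph is the disjoint union of connected bipartite graphs, each of which has a $k_i$-odd graceful coloring; shifting the labels of each component into disjoint intervals of $\Z_{\ge 0}$ (which preserves all edge differences, and hence both the properness and the oddness conditions, since no edges run between components) produces an odd graceful coloring of the whole graph.

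I do not expect any genuine obstacle here; the corollary is essentially a restatement of Theorem~\ref{thm:upper_bound} (finiteness of the bound) together with the cited converse. If anything, the only thing worth being careful about is making the ``if and only if'' explicit: the ``only if'' is \cite[Corollary~2]{SLHB}, and the ``if'' is the finiteness consequence of Theorem~\ref{thm:upper_bound}, with the disjoint-union remark covering the non-connected case.
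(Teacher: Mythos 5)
Your proposal matches the paper's argument: the ``only if'' direction is the cited result of Suparta et al., and the ``if'' direction follows from the finiteness of the bound in Theorem~\ref{thm:upper_bound}, exactly as the paper intends. Your extra remark on handling disconnected bipartite graphs by translating the labels of each component is a harmless refinement and does not change the approach.
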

Furthermore, we can also describe a property of an odd graceful coloring $\lambda$ on $G$.  Since any edges in a graph $G$ labeled by $\lambda$ are adjacent to vertices labeled with different parities, we obtain the following result on $\lambda$. \begin{corollary}\label{cor:parity}
    Let $\lambda$ be an odd graceful coloring $\lambda$ on a bipartite graph $G$ with bipartitions $U$ and $W$. Then, exactly one of these is true: 
    \begin{itemize}
        \item  $\lambda(u)$ is odd and $\lambda(w)$ is even for all $u\in U,\:w\in W$, or
        \item  $\lambda(u)$ is even and $\lambda(w)$ is odd for all $u\in U,\:w\in W$.
    \end{itemize}
\end{corollary}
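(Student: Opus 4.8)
The plan is to deduce Corollary~\ref{cor:parity} directly from the definition of an odd graceful coloring together with the bipartite structure of $G$, essentially by a connectedness/propagation argument on parities. First I would record the key parity observation: if $\lambda$ is an odd graceful coloring and $uv\in E(G)$, then $\lambda'(uv)=|\lambda(u)-\lambda(v)|$ is odd, which forces $\lambda(u)$ and $\lambda(v)$ to have opposite parities. Thus the parity function $p\colon V(G)\to\Z/2\Z$ given by $p(x)=\lambda(x)\bmod 2$ is a proper $2$-coloring of $G$.

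Next I would use that $G$ is connected (as assumed throughout the paper) and bipartite with bipartition $U,W$. A connected bipartite graph has exactly two proper $2$-colorings, and they are precisely the two functions constant on each of $U$ and $W$: indeed, fix any vertex $x_0$; since $G$ is connected, for every vertex $y$ the parity $p(y)$ is determined by $p(x_0)$ and the parity of the length of any $x_0$--$y$ path, and this path-length parity depends only on whether $y$ lies in the same part as $x_0$ or not (this is the standard fact characterising bipartiteness). Hence $p$ is constant on $U$ and constant on $W$, and since $U,W$ are nonempty and joined by at least one edge, the two constant values are distinct. Therefore either $p\equiv 1$ on $U$ and $p\equiv 0$ on $W$, or $p\equiv 0$ on $U$ and $p\equiv 1$ on $W$, which is exactly the dichotomy claimed; the two alternatives are mutually exclusive since they disagree on the parity of every vertex.

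I do not anticipate a genuine obstacle here: the statement is essentially a restatement of "a proper $2$-colouring of a connected bipartite graph is unique up to swapping the two colours," combined with the elementary fact that $|a-b|$ odd $\iff$ $a\not\equiv b\pmod 2$. The only point requiring a word of care is the standing assumption that $G$ is connected (stated in Section~\ref{sec:intro}); if one wished to allow disconnected $G$, the conclusion as literally stated could fail on individual components, so I would either invoke connectedness explicitly or, for a disconnected $G$, apply the argument componentwise and note that the global statement holds once the bipartition class labels are chosen consistently. In the write-up I would keep it to a few lines, citing only the opposite-parity observation and the uniqueness of the $2$-colouring.
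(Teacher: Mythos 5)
Your proposal is correct and follows essentially the same route as the paper, which deduces the corollary from the single observation that an odd edge label $|\lambda(u)-\lambda(v)|$ forces adjacent vertices to have opposite parities, the constancy on each part then being the standard consequence of connectedness and bipartiteness that you spell out. Your extra remarks on uniqueness of the $2$-colouring and the disconnected case are fine but not needed beyond what the paper intends.
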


Of course, we may also use  Theorem~\ref{thm:upper_bound} for specific classes of graphs. For example, following results on ladder graphs~\cite[Theorem 10]{SLHB}, we may compute the odd graceful chromatic number of the class of Möbius ladder graphs $M_{2n}$, defined in Section~\ref{sec:def}. Note that this graph is bipartite if and only if $n$ is odd, therefore we only consider this case, as follows.

\begin{figure}[h!]
    \centering
    \includegraphics[scale=0.6]{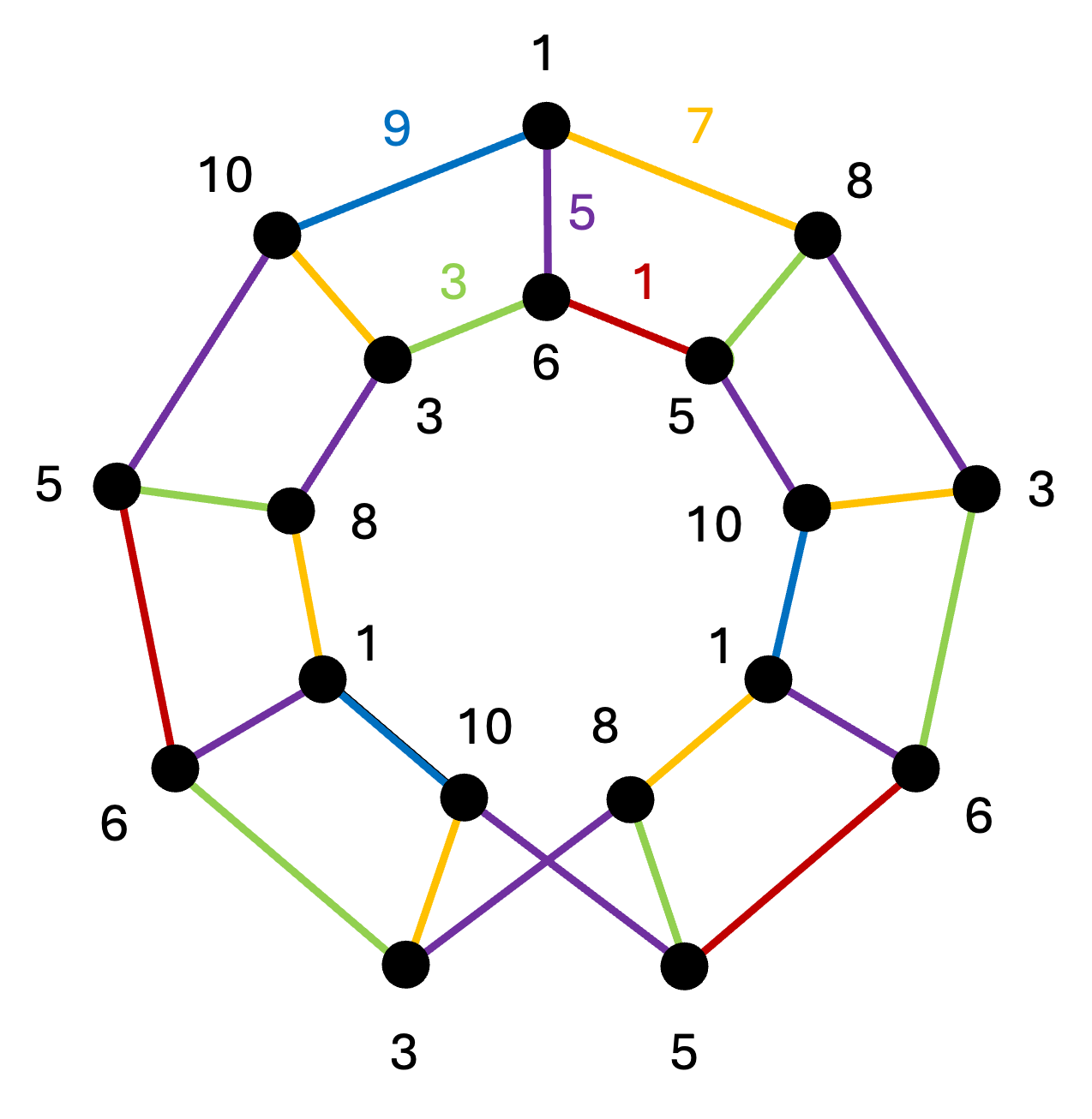}
    \caption{A 10-odd graceful coloring of $M_{18}$.}
\end{figure}

\begin{corollary}\label{cor:mob}
For odd $n \ge 3$, we obtain
\begin{align*}
    \chi_{og}(M_{2n}) & \le \begin{cases}
        10, & \text{ if } n \equiv 3 \pmod{6},\\
        14, & \text{ if } n \equiv 1,5 \pmod{6}, n > 5,\\
        18, & \text{ if } n = 5.
    \end{cases}
\end{align*}
\end{corollary}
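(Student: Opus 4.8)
The plan is to apply Theorem~\ref{thm:upper_bound} directly to the M\"obius ladder $M_{2n} \cong Ci_{2n}(1,n)$ for odd $n$, so that the only real work is to identify the bipartition $U \sqcup W$ and bound the chromatic numbers $\chi(M_{2n}^2[U])$ and $\chi(M_{2n}^2[W])$. First I would fix the standard bipartition: labeling the vertices $u_1,\dots,u_{2n}$ cyclically, the two colour classes of $M_{2n}$ are the even-indexed and odd-indexed vertices (this is where oddness of $n$ is used — the chord $u_iu_{i+n}$ joins indices of opposite parity only when $n$ is odd, which is exactly why $M_{2n}$ is bipartite precisely then). By the vertex-transitivity of $Ci_{2n}(1,n)$ and the symmetry swapping the two classes, $M_{2n}^2[U]$ and $M_{2n}^2[W]$ are isomorphic, so it suffices to analyse one of them, say $H \coloneqq M_{2n}^2[U]$ on the $n$ vertices of even index.

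Next I would describe $H$ explicitly. Two vertices of $U$ are adjacent in $M_{2n}^2$ iff they are at distance $\le 2$ in $M_{2n}$; within a single colour class the distance is always even, so this means distance exactly $2$. The distance-$2$ pairs among even-indexed vertices come from: two rim steps ($u_i$ to $u_{i+2}$), and a rim step followed by a chord or vice versa ($u_i$ to $u_{i+1+n}$, equivalently $u_i$ to $u_{i+n-1}$ since $n$ is odd makes $n\pm1$ even). Re-indexing $U$ as $\mathbb{Z}/n$ via $u_{2j}\mapsto j$, one checks these translate to the connection set $\{\pm1\} \cup \{\pm\tfrac{n-1}{2}\}$ (the chord-type steps), so $H \cong Ci_n(1, \tfrac{n-1}{2})$, a $4$-regular circulant on $n$ vertices (with the usual degeneracies when these offsets coincide or equal $n/2$, which cannot happen for odd $n\ge 7$; the small cases $n=3,5$ must be treated by hand, and indeed $n=3$ gives $H\cong K_3$ and $n=5$ gives $H \cong K_5$). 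The remaining task is a case analysis of $\chi(Ci_n(1,\tfrac{n-1}{2}))$ modulo the residue of $n$: I expect $\chi = 3$ when $n \equiv 3 \pmod 6$ (a proper $3$-colouring exists because $3 \mid n$ and one can colour by $j \bmod 3$ after checking $\tfrac{n-1}{2} \not\equiv 0 \pmod 3$ — in fact $\tfrac{n-1}{2}$ is then $\equiv 1 \pmod 3$), and $\chi = 4$ in the cases $n \equiv 1, 5 \pmod 6$ with $n > 5$ (the lower bound $\chi \ge 4$ coming from exhibiting a $K_4$, and the upper bound $\chi \le 4$ from Brooks' theorem since $H$ is connected, $4$-regular, and not complete for $n \ge 7$). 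Plugging into Theorem~\ref{thm:upper_bound} gives $\chi_{og}(M_{2n}) \le 2(\chi(H) + \chi(H) - 1) = 2(2\chi(H)-1)$, which is $2(2\cdot3-1)=10$, $2(2\cdot4-1)=14$, and $2(2\cdot5-1)=18$ in the three cases respectively, matching the claim.

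The main obstacle will be the explicit chromatic-number computation for the circulant $Ci_n(1,\tfrac{n-1}{2})$: one must correctly handle the arithmetic of when the two offsets are distinct, when neither equals $n/2$ (automatic here), and produce both a valid proper colouring achieving the stated value and a clique or odd-structure obstruction preventing anything smaller. Brooks' theorem cleanly supplies the upper bound $4$ once connectedness and non-completeness are verified, so the delicate direction is the lower bound $\chi \ge 4$ for $n \equiv 1,5 \pmod 6$ (locating a $4$-clique, i.e.\ four even-indexed vertices pairwise at rim-distance $\le 2$ — for instance $u_0, u_2$ together with the two chord-neighbours of one of them) and confirming that a $3$-colouring really is impossible there (equivalently that $3 \nmid n$ obstructs the naive colouring and no other $3$-colouring exists). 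The separate verification of the boundary values $n=3$ and $n=5$, where $H$ is a complete graph and Brooks' theorem does not apply, accounts for the exceptional line $n=5$ in the statement.
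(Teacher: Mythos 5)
Your proposal is correct and follows essentially the same route as the paper: apply Theorem~\ref{thm:upper_bound} to the bipartition of $M_{2n}$, identify the induced square $M_{2n}^2[U]$ as a circulant on $n$ vertices, and split into cases according to $n \bmod 6$. Two small remarks. First, your circulant $Ci_n\bigl(1,\tfrac{n-1}{2}\bigr)$ and the paper's $Ci_n(1,2)$ are the same graph: multiplication by $2$ (a unit mod the odd number $n$) sends the connection set $\{\pm 1,\pm\tfrac{n-1}{2}\}$ to $\{\pm 2,\mp 1\}$, so your description is fine, and your residue check $\tfrac{n-1}{2}\equiv 1\pmod 3$ for $n\equiv 3\pmod 6$ correctly yields the $3$-colouring; where the paper writes down an explicit $4$-colouring of $Ci_n(1,2)$ for $n\equiv 1,5\pmod 6$, $n>5$, you instead invoke Brooks' theorem (Proposition~\ref{prp:brooks}) on a connected $4$-regular non-complete circulant, which is a clean and valid alternative. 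Second, the step you flag as the main obstacle — the lower bound $\chi\ge 4$ via a $K_4$, and ruling out $3$-colourings — is not needed at all: the corollary asserts only an upper bound on $\chi_{og}(M_{2n})$, so only upper bounds on $\chi(M_{2n}^2[U])$ enter Theorem~\ref{thm:upper_bound}, exactly as in the paper's proof. Dropping that part leaves a complete argument, with the boundary cases $n=3$ ($H\cong K_3$) and $n=5$ ($H\cong K_5$) handled separately as you indicate.
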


 We now discuss applications of Theorem~\ref{thm:upper_bound} to more general classes of  bipartite graphs. For general graph, we first note that since $\chi(G^2[U])\le |V(G^2[U]|=|U|$ for any $U \subseteq V(G)$, we may obtain \begin{align}\label{eqn:vertices}
\chi_{og}(G)\le 2|V(G)|-2.    
\end{align} for any bipartite graph $G$. This bound will be further improved in Section~\ref{sec:near-complete}.

With Theorem~\ref{thm:upper_bound}, wee can also prove an upper bound that relates $\chi_{og}(G)$ with the maximum degree of $G$.

\begin{theorem}\label{thm:D}
Let $G$ be a bipartite graph with a bipartition $U$ and $W$.
 Then
\begin{align*}
    \chi_{og}(G) \le \begin{cases}
        4(\Delta(G))^2 - 4\Delta(G) - 2, & \text{if } G^2[U],\:G^{2}[W] \not\cong C_{2n+1} \: \text{and} \: K_{n+2}\\ & \text{ for some } n \in \mathbb{N},\\ 
        4(\Delta(G))^2 - 4\Delta(G) + 2, & \text{otherwise}.
    \end{cases}
\end{align*}
\end{theorem}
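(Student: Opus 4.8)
The plan is to derive Theorem~\ref{thm:D} from Theorem~\ref{thm:upper_bound} by bounding $\chi(G^2[U])$ and $\chi(G^2[W])$ in terms of $\Delta(G)$. First I would observe that if $v$ is a vertex of $G$ with neighbourhood $N(v)$, then the vertices at distance at most $2$ from $v$ that lie in the same part as $v$ are exactly the vertices reachable by a path of length $2$; each such vertex shares a common neighbour with $v$. Counting: a vertex $u$ in the same part as $v$ is adjacent to $v$ in $G^2[U]$ only if there is $x \in N(v)$ with $u \in N(x)$, and $x$ has at most $\Delta(G)-1$ neighbours other than $v$. Since $v$ has at most $\Delta(G)$ neighbours, this gives $\deg_{G^2[U]}(v) \le \Delta(G)(\Delta(G)-1) = \Delta(G)^2 - \Delta(G)$, and likewise for $W$. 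Hence $\Delta(G^2[U]), \Delta(G^2[W]) \le \Delta(G)^2 - \Delta(G) =: D$.

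Next I would apply the greedy/Brooks bound. Trivially $\chi(G^2[U]) \le D + 1$, which already yields $\chi_{og}(G) \le 2(2(D+1) - 1) = 4D + 2 = 4\Delta(G)^2 - 4\Delta(G) + 2$, giving the second case. To get the improved bound $4\Delta(G)^2 - 4\Delta(G) - 2$ in the first case, I would invoke Brooks' theorem (cited as~\cite{Brooks} in the overview): if a connected graph $H$ with maximum degree $D$ is neither a complete graph $K_{D+1}$ nor an odd cycle $C_{2n+1}$, then $\chi(H) \le D$. The hypothesis of the first case of the theorem excludes precisely $G^2[U], G^2[W] \cong C_{2n+1}$ and $\cong K_{n+2}$; here $K_{n+2}$ corresponds to $K_{D+1}$ when the component in question has maximum degree $D = n+1$, and an odd cycle has maximum degree $2$. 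One subtlety is that Brooks' theorem applies to connected graphs, so I would apply it componentwise: $\chi(G^2[U]) = \max_C \chi(C)$ over connected components $C$, and each component either has maximum degree strictly less than $D$ (where greedy gives $\chi(C) \le D$) or has maximum degree exactly $D$, in which case Brooks gives $\chi(C) \le D$ unless $C \cong K_{D+1}$ or $C$ is an odd cycle — both excluded by hypothesis. Thus $\chi(G^2[U]), \chi(G^2[W]) \le D$, and Theorem~\ref{thm:upper_bound} gives $\chi_{og}(G) \le 2(2D - 1) = 4D - 2 = 4\Delta(G)^2 - 4\Delta(G) - 2$.

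The main obstacle, and the point needing the most care, is matching the exceptional cases in the theorem statement to the exceptional cases of Brooks' theorem across all components simultaneously, while correctly tracking the maximum degree. The statement writes the exceptions as $C_{2n+1}$ and $K_{n+2}$ "for some $n \in \mathbb{N}$," so I would need to check that the parametrization is consistent: a component isomorphic to $K_{n+2}$ has maximum degree $n+1$, and this forces $D = \Delta(G)^2 - \Delta(G) \ge n+1$; similarly an odd-cycle component has maximum degree $2$. I should also handle the degenerate small cases (e.g. $\Delta(G) \le 1$, where $G^2[U]$ has no edges and $\chi = 1$) to make sure the claimed numerical bounds remain valid, and note that when $\Delta(G) = 2$ one must be slightly careful since then $D = 2$ and odd cycles become genuinely possible exceptions. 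Once the componentwise Brooks argument is set up cleanly, the rest is the routine arithmetic substitution into Theorem~\ref{thm:upper_bound}.
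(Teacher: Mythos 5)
Your proposal is correct and follows essentially the same route as the paper: bound $\Delta(G^2[U]),\Delta(G^2[W])$ by $\Delta(G)(\Delta(G)-1)$ via the common-neighbour count, then feed Brooks' theorem (Proposition~\ref{prp:brooks}) into Theorem~\ref{thm:upper_bound} to get the two cases. Your extra care about applying Brooks componentwise and about degenerate small $\Delta(G)$ is a reasonable refinement but does not change the argument.
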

Note that Theorem~\ref{thm:D} is useful when $\Delta(G)\ge 3$. The case where G is a bipartite graph with $\Delta(G)\le 2$, where such graphs are either paths or cycles, has already been addressed in \cite{SLHB}. Related to this bound, we recall  a lower bound for $\chi_{og}$ from \cite[Lemma 3]{SLHB}, where for all (bipartite) graphs $G$, \[
\chi_{og}(G)\ge 2\Delta(G).
\] Some examples of the equality cases can be seen at~\cite{SLHB}.

On another hand, checking $G^2[U]$ and $G^2[W]$ is sometimes not trivial for a graph $G$. Therefore, we also obtain a related upper bound as a corollary of Theorem~\ref{thm:D} that does not require this condition.
\begin{proposition}\label{thm:D2}
Let $G$ be a bipartite graph and a bipartition $U$ and $W$ where $|U| \le |W|$. If $\diam(G) \ge 5, |E(G)| > 2|W|$ and $|U| \ge 4$, then $\chi_{og}(G) \le 4(\Delta(G))^2 - 4\Delta(G) - 2$.
\end{proposition}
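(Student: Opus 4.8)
The plan is to deduce Proposition~\ref{thm:D2} from Theorem~\ref{thm:D} by showing that, under the stated hypotheses ($\diam(G) \ge 5$, $|E(G)| > 2|W|$, $|U| \ge 4$, and $|U| \le |W|$), neither $G^2[U]$ nor $G^2[W]$ can be isomorphic to an odd cycle $C_{2n+1}$ or a complete graph $K_{n+2}$. Once this is established, the first case of Theorem~\ref{thm:D} applies verbatim and gives the desired bound $\chi_{og}(G) \le 4(\Delta(G))^2 - 4\Delta(G) - 2$. So the entire task reduces to ruling out those two exceptional isomorphism types for each of the two induced squared subgraphs.

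\textbf{Ruling out the odd-cycle case.} First I would handle $G^2[W]$ and $G^2[U]$ being an odd cycle. The key observation is that $G^2[U]$ is a graph on $|U|$ vertices whose edges join vertices of $U$ at distance exactly $2$ in $G$; since $G$ is bipartite with bipartition $U, W$, every pair of vertices in $U$ is at even distance, so $G^2[U]$ records adjacency "through a common neighbor in $W$" (more precisely through a path of length $2$). A cycle $C_{2n+1}$ has an odd number of vertices, so if $G^2[U] \cong C_{2n+1}$ then $|U|$ is odd; similarly $G^2[W] \cong C_{2m+1}$ forces $|W|$ odd. The real leverage comes from $\diam(G) \ge 5$: I would argue that the diameter condition forces $G^2$ to have diameter at least $3$ (since distances in $G^2$ are roughly half those in $G$, specifically $\diam(G^2) \ge \lceil \diam(G)/2 \rceil \ge 3$), and then relate this to the diameters of the induced pieces $G^2[U]$, $G^2[W]$. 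A cycle $C_{2n+1}$ has diameter $n$, which is large only when $n$ is large, so this alone may not immediately kill it — instead I expect the clean contradiction to come from combining the diameter bound with the edge-count and size hypotheses. If $G^2[U] \cong C_{2n+1}$ then $G^2[U]$ is $2$-regular, which heavily constrains how vertices of $U$ share neighbors in $W$; this meshes poorly with $|E(G)| > 2|W|$, which says $G$ is "edge-dense" relative to $|W|$ — by double counting, $\sum_{w \in W} \deg(w) = |E(G)| > 2|W|$, so the average degree in $W$ exceeds $2$, forcing many vertices of $W$ to have degree $\ge 3$, which in turn creates triangles or larger cliques in $G^2[U]$, contradicting $2$-regularity. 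I would make this precise: a vertex $w \in W$ of degree $\ge 3$ has three neighbors in $U$ that are pairwise adjacent in $G^2[U]$, i.e. a $K_3 \subseteq G^2[U]$, which an odd cycle $C_{2n+1}$ with $n \ge 2$ does not contain; and $|U| \ge 4$ rules out the degenerate $C_3 = K_3$ case.

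\textbf{Ruling out the complete-graph case, and the main obstacle.} Next I would rule out $G^2[U] \cong K_{n+2}$ and $G^2[W] \cong K_{m+2}$. If $G^2[W] \cong K_{|W|}$, then every two vertices of $W$ are at distance $\le 2$ in $G$, hence at distance exactly $2$ (bipartiteness), so they share a common neighbor in $U$. I claim this forces $\diam(G) \le 4$: any $u_1 \in U$ and $u_2 \in U$ each have a neighbor in $W$, and since $G$ is connected with $|W| \ge |U| \ge 4$, one can route $u_1 \to w_1 \to u \to w_2 \to u_2$ using the common-neighbor property, giving $d(u_1,u_2) \le 4$; combined with the fact that any $u \in U$, $w \in W$ are then at distance $\le 3$ and any two vertices of $W$ at distance $2$, we get $\diam(G) \le 4$, contradicting $\diam(G) \ge 5$. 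The symmetric argument for $G^2[U] \cong K_{|U|}$ is a little more delicate because $|U|$ can be as small as $4$, but the same routing idea applies since $|W| \ge |U| \ge 4$ ensures enough room; here I would lean on $|E(G)| > 2|W| \ge 2|U|$ to guarantee $G$ is not too sparse to complete the routing. \textbf{The main obstacle} I anticipate is the bookkeeping in the complete-graph case: translating "$G^2[U]$ or $G^2[W]$ is complete" into a clean, airtight diameter bound requires carefully tracking paths through both parts and using connectedness plus the three numerical hypotheses in just the right combination — in particular making sure the argument is symmetric enough to cover both $U$ and $W$ despite $|U| \le |W|$, and handling the boundary case $|U| = 4$ where there is the least slack. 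Once both exceptional types are excluded for both induced subgraphs, Theorem~\ref{thm:D} delivers the bound immediately.
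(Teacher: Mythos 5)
Your proposal follows essentially the same route as the paper's proof: exclude the Brooks-exceptional graphs for both $G^2[U]$ and $G^2[W]$ (a vertex of degree at least $3$, forced by $|E(G)|>2|W|\ge 2|U|$, yields a triangle ruling out odd cycles since $|U|,|W|\ge 4$, while $\diam(G)\ge 5$ rules out completeness via exactly the length-$4$ routing you describe), and then invoke Theorem~\ref{thm:D}. The only remarks are that the completeness exclusion you flag as the main obstacle in fact needs no further bookkeeping or numerical hypotheses---the paper's proof is precisely your routing argument---and that you should also state the symmetric triangle argument showing $G^2[W]$ is not a cycle.
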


In particular, any regular graph whose diameter is at least $5$ satisfy the condition given in Proposition~\ref{thm:D2}. As a sample of application, Proposition~\ref{thm:D2} can be applied directly to determine a bound on the odd graceful chromatic number of cubic graphs, as follows.
\begin{corollary}
For any cubic bipartite graph $G$ with $\diam(G) \ge 5$,
    $\chi_{og}(G) \le 22$.
\end{corollary}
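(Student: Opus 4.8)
The plan is to obtain the corollary as an immediate application of Proposition~\ref{thm:D2}. For a cubic graph we have $\Delta(G)=3$, and $4(\Delta(G))^2-4\Delta(G)-2 = 4\cdot 9-12-2 = 22$, so the entire task reduces to checking that a cubic bipartite graph $G$ with $\diam(G)\ge 5$ satisfies the three hypotheses of Proposition~\ref{thm:D2}: with bipartition $U,W$ chosen so that $|U|\le|W|$, one needs $\diam(G)\ge 5$ (given), $|E(G)|>2|W|$, and $|U|\ge 4$.

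First I would handle the two edge/size conditions together. Since $G$ is $3$-regular, counting edges incident to each part of the bipartition gives $3|U| = |E(G)| = 3|W|$, so $|U|=|W|$ (hence the requirement $|U|\le|W|$ is automatic) and $|E(G)| = 3|W| > 2|W|$, as $|W|\ge 1$. This settles two of the three conditions at once.

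It remains to verify $|U|\ge 4$. Here I would use the diameter hypothesis: a geodesic realising $\diam(G)\ge 5$ is a path on at least $6$ vertices which alternates between $U$ and $W$, so each part contains at least $3$ of these vertices, giving $|U|=|W|\ge 3$. To rule out $|U|=|W|=3$, observe that a $3$-regular bipartite graph whose parts both have size $3$ forces every vertex of $U$ to be adjacent to all $3$ vertices of $W$, i.e. $G\cong K_{3,3}$, which has diameter $2$, contradicting $\diam(G)\ge 5$. Therefore $|U|\ge 4$, all hypotheses of Proposition~\ref{thm:D2} hold, and we conclude $\chi_{og}(G)\le 4(\Delta(G))^2-4\Delta(G)-2 = 22$.

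There is essentially no obstacle in this argument; it is a short verification, and the only step requiring a moment's care is excluding the degenerate case $|U|=|W|=3$ via the observation that the only cubic bipartite graph on parts of size $3$ is $K_{3,3}$.
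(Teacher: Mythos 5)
Your proof is correct and follows the paper's intended route: the corollary is a direct application of Proposition~\ref{thm:D2} with $\Delta(G)=3$, and your verification of the hypotheses (using $3|U|=|E(G)|=3|W|$ for regularity, and excluding $|U|=|W|=3$ via $K_{3,3}$ having diameter $2$) fills in details the paper leaves implicit in its remark that regular graphs of diameter at least $5$ satisfy the conditions of that proposition.
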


\subsection{Complete and near-complete bipartite graphs}\label{sec:near-complete}In this section, we want to provide upper bounds for a general bipartite graph $G$ with respect to the number of vertices $|V(G)|$ is found at~\eqref{eqn:vertices}. We already have a non-trivial upper bound in~\eqref{eqn:vertices}, found by directly applying Theorem~\ref{thm:upper_bound}. We improve the bound in this section, culminating in the following result.

\begin{theorem}\label{thm:main-upper-bound}
Let $G$ be a bipartite, but not complete bipartite, graph. Then, $$\chi_{og}(G) \le 2|V(G)|-4.$$
\end{theorem}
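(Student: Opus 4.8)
The plan is to reduce the general bound $\chi_{og}(G)\le 2|V(G)|-4$ to the already-established bound $\chi_{og}(G)\le 2|V(G)|-2$ of~\eqref{eqn:vertices} by squeezing out the two extra colors through a more careful bookkeeping in Theorem~\ref{thm:upper_bound}. The starting observation is that~\eqref{eqn:vertices} comes from the crude estimate $\chi(G^2[U])\le|U|$ and $\chi(G^2[W])\le|W|$ plugged into $\chi_{og}(G)\le 2(\chi(G^2[U])+\chi(G^2[W])-1)$. To improve it, I would show that whenever $G$ is bipartite but \emph{not} complete bipartite, at least one of the two induced graphs $G^2[U]$, $G^2[W]$ fails to be complete, so that its chromatic number is at most $|U|-1$ (resp. $|W|-1$), which immediately yields $\chi_{og}(G)\le 2(|U|+|W|-1-1)=2|V(G)|-4$.

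The key structural step is therefore: if $G$ is connected, bipartite with bipartition $U,W$, and $G^2[U]=K_{|U|}$ \emph{and} $G^2[W]=K_{|W|}$, then $G$ is complete bipartite. To see this, suppose $G^2[U]$ is complete; this means every two vertices of $U$ lie at distance exactly $2$ in $G$, i.e.\ any two vertices of $U$ have a common neighbor in $W$. Likewise $G^2[W]=K_{|W|}$ means any two vertices of $W$ have a common neighbor in $U$. I would argue that under these two conditions every $u\in U$ is adjacent to every $w\in W$: fix $u\in U$ and $w\in W$; pick any neighbor $w'$ of $u$ in $W$ (exists by connectedness), then $w$ and $w'$ share a common neighbor $u'\in U$, and $u,u'$ share a common neighbor $w''$; chasing these adjacencies — possibly iterating along a shortest $u$–$w$ path and using the ``common neighbor'' property at each step of length two — forces $uw\in E(G)$. (Concretely, a shortest path from $u$ to $w$ has even length minus one, i.e.\ odd length; if it had length $\ge 3$, its endpoints $u$ and the vertex three steps along would be two vertices of $U$ at distance $2$ with a common neighbor, and replacing the middle two edges by that common neighbor shortens the path by using the completeness of $G^2[U]$ to collapse — but more cleanly, one shows directly the diameter is $\le 2$ and hence, since $G$ is bipartite, every cross pair is an edge.) I expect this to be the main obstacle: making the adjacency-chasing argument clean and airtight, especially handling the degenerate small cases $|U|=1$ or $|W|=1$ (where $G$ is a star $K_{1,n}$, which \emph{is} complete bipartite, so those are correctly excluded by hypothesis).

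Once the structural lemma is in hand, the proof concludes in one line: since $G$ is not complete bipartite, the contrapositive gives $G^2[U]\ne K_{|U|}$ or $G^2[W]\ne K_{|W|}$; in the first case $\chi(G^2[U])\le|U|-1$ and $\chi(G^2[W])\le|W|$, in the second case the roles swap; either way $\chi(G^2[U])+\chi(G^2[W])\le|U|+|W|-1=|V(G)|-1$, and Theorem~\ref{thm:upper_bound} gives $\chi_{og}(G)\le 2(|V(G)|-1-1)=2|V(G)|-4$. I would also double-check that the hypothesis ``connected'' (standing assumption in the paper) is used only to guarantee $G^2[U]$ is connected enough for the diameter argument; if $G$ is allowed to be disconnected one would apply the bound componentwise, but since the paper assumes connectedness unless stated otherwise, I would simply invoke that.
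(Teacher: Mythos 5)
Your proposal has a genuine gap: the key structural lemma is false. It is not true that a connected bipartite $G$ with $G^2[U]\cong K_{|U|}$ and $G^2[W]\cong K_{|W|}$ must be complete bipartite. The cycle $C_6$ is a counterexample (any two vertices in the same part have a common neighbor, so both induced squares are $K_3$, yet $C_6$ is far from complete bipartite), as are the crown graphs $K_{n,n}$ minus a perfect matching for $n\ge 3$, and --- most tellingly --- the graph $K_{m,n}-K_{1,1}$ itself for $m,n\ge 2$: removing a single edge $u_1w_1$ still leaves every pair of vertices in $U$ (and in $W$) with a common neighbor. So your adjacency-chasing/diameter argument cannot be made to work; the two ``common neighbor'' hypotheses only force diameter at most $3$, not $2$, and a cross pair at distance $3$ need not be an edge. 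Consequently the deduction $\chi(G^2[U])\le |U|-1$ or $\chi(G^2[W])\le |W|-1$ fails exactly on the graphs that matter.

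Moreover, the failure is not just in your lemma but in the whole route: for $K_{m,n}-K_{1,1}$ one has $\chi(G^2[U])+\chi(G^2[W])=m+n$, so Theorem~\ref{thm:upper_bound} can never give anything better than $2(m+n-1)=2|V(G)|-2$, i.e.\ the bound~\eqref{eqn:vertices}; no bookkeeping inside that theorem squeezes out the extra $2$. The paper instead observes that any bipartite, non-complete-bipartite $G$ is a subgraph of $K_{m,n}-K_{1,1}$ on the same vertex set, and then invokes the monotonicity Lemma~\ref{lem:subgraph} together with the exact value $\chi_{og}(K_{m,n}-K_{1,1})=2m+2n-4$ from Theorem~\ref{thm:near-complete}, which is established by an explicit odd graceful labeling of $K_{m,n}-K_{1,1}$ (labels $2i-1$ on $U$ and $2(j+m-2)$ on $W$) rather than through the chromatic bound. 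Some ingredient of that kind --- an explicit construction beating the $\chi(G^2[U])+\chi(G^2[W])$ barrier --- is unavoidable, and it is missing from your proposal.
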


Proving Theorem~\ref{thm:main-upper-bound} requires several new results on complete and near-complete bipartite graphs. First, we compute the odd graceful coloring number of complete bipartite graphs $K_{m,n}$, with $m\ge n\ge 2$.  
Note that in the case $n=1$, Suparta et. al. \cite{SLHB} proved \[\chi_{og}(K_{m,1})=2m-2. \]
\begin{theorem}\label{thm:complete_bipartite}
    For positive integers $m\ge n\ge 2$, we have\begin{align*}
        \chi_{og}(K_{m,n})=\begin{cases}
            2m+2n-3, &\text{if }(m,n)=(2s,2s)\text{ or }(2s,2),\:s\in \N,\\
            2m+2n-2, &\text{otherwise.}
        \end{cases}
    \end{align*}
\end{theorem}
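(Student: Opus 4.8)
The plan is to prove $\chi_{og}(K_{m,n})\le 2m+2n-2$ for all $m\ge n\ge 2$, to sharpen this to $2m+2n-3$ in the two exceptional families by explicit colorings, and then to establish matching lower bounds. The general upper bound is immediate from Theorem~\ref{thm:upper_bound}: since $(K_{m,n})^{2}=K_{m+n}$, the subgraphs $(K_{m,n})^{2}[U]$ and $(K_{m,n})^{2}[W]$ induced on the two sides of the bipartition are $K_{m}$ and $K_{n}$, so $\chi((K_{m,n})^{2}[U])+\chi((K_{m,n})^{2}[W])=m+n$ and Theorem~\ref{thm:upper_bound} yields $\chi_{og}(K_{m,n})\le 2(m+n-1)$.

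For the exceptional cases I would exhibit explicit odd graceful colorings saving one color. By Corollary~\ref{cor:parity} such a coloring is the same as a set $A=\lambda(U)$ of $m$ labels of one parity and a set $B=\lambda(W)$ of $n$ labels of the other parity inside the palette, subject only to the constraint of Lemma~\ref{lem:abcgraceful}, which for $K_{m,n}$ reads $(A+A)\cap 2\cdot B=\emptyset$ and $(B+B)\cap 2\cdot A=\emptyset$ (no label of one side is the midpoint of two labels of the other). For $(m,n)=(2s,2s)$, using a palette of $2m+2n-3$ colors, take $A$ to be the $2s$ labels $\equiv1\pmod4$ and $B$ the $2s$ even labels $\equiv2$ or $4\pmod8$: then $A+A\subseteq 2+4\ZZ$ and $2\cdot B\subseteq 4\ZZ$, while $2\cdot A\subseteq 2+8\ZZ$ and $B+B$ meets only the classes $0,4,6\pmod 8$, so both constraints hold, the sizes are right, and everything fits. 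For $(m,n)=(2s,2)$, using a palette of $2m+2n-3$ colors, take $A$ to be all $2s$ of its even labels and $B=\{1,\,2m+2n-3\}$, the smallest and largest odd labels: then $A+A$ is an interval of even numbers bracketed by the two points of $2\cdot B$, and $B+B\subseteq 2+4\ZZ$ is disjoint from $2\cdot A\subseteq 4\ZZ$. (For $s=1$ both recipes give $K_{2,2}$ on $5$ colors, consistently.)

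For the lower bound, let $\lambda$ be an odd graceful coloring of $K_{m,n}$ and put $A=\lambda(U)$, $B=\lambda(W)$; by Corollary~\ref{cor:parity} these have opposite parities, $|A|=m$, $|B|=n$, and by Lemma~\ref{lem:abcgraceful}, $(A+A)\cap 2\cdot B=\emptyset=(B+B)\cap 2\cdot A$. Applying Lemma~\ref{lem:sumset} to this pair of relations gives $\chi_{og}(K_{m,n})\ge 2m+2n-3$ in all cases; the extremal configuration is $A$ and $B$ sitting in separated blocks of consecutive same-parity integers, which, since such blocks have diameters $2(m-1)$ and $2(n-1)$ and must be separated by at least $1$, forces the smallest and largest labels to differ by at least $2m+2n-3$. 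To improve this to $\ge 2m+2n-2$ outside the exceptional families I would analyze the equality case of Lemma~\ref{lem:sumset}: equality forces $A$ and $B$ to be arithmetic progressions in prescribed residue classes, packed as tightly as the midpoint condition allows, and checking which $(m,n)$ admit such a configuration on only $2m+2n-3$ colors leaves precisely $m=n$ even and $n=2$ with $m$ even.

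I expect this rigidity step to be the main obstacle: determining exactly which opposite-parity pairs $(A,B)$ attain the additive bound with equality, and verifying that a palette of size $2m+2n-3$ is possible only for $(2s,2s)$ and $(2s,2)$, is the delicate combinatorial heart of the proof — and is presumably precisely what Lemma~\ref{lem:sumset} is crafted to handle. The general upper bound, the two explicit constructions, and the first-order counting underlying $\chi_{og}\ge 2m+2n-3$ are comparatively routine once Lemmas~\ref{lem:abcgraceful} and~\ref{lem:sumset} are available.
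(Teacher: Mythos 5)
Your general upper bound and the two explicit colorings are correct and essentially identical to the paper's (its optimal labelings of $K_{2s,2s}$ and $K_{2s,2}$ are exactly your residue-class constructions), but the lower bound, which is the substance of the theorem, has a genuine gap. From $(A+A)\cap 2\cdot B=\emptyset$ and $(B+B)\cap 2\cdot A=\emptyset$ together with Lemma~\ref{lem:sumset} and positivity of the labels, all you can extract directly is that $A+A$ and $2\cdot B$ are disjoint collections of at least $2m-1$ and $n$ even numbers in $[2,2k]$, i.e.\ $k\ge 2m+n-1$, and symmetrically $k\ge m+2n-1$. As soon as $n\ge 3$ this is strictly weaker than $2m+2n-3$ (for $K_{3,3}$ it gives $k\ge 8$, not $9$), so the claim that Lemma~\ref{lem:sumset} ``gives $\chi_{og}(K_{m,n})\ge 2m+2n-3$ in all cases'' is unjustified, and your appeal to ``the extremal configuration'' being two separated blocks of consecutive same-parity integers assumes precisely the structure that needs proof. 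The paper's argument uses an idea your sketch lacks: refine by residues mod $4$, writing the odd side's labels as $T_1\cup T_3$ (classes $1,3\pmod 4$) and the even side's as $T_2\cup T_4$ (classes $2,0\pmod 4$). If $T_1,T_3$ are both nonempty, then $\lambda(T_1)+\lambda(T_3)$ and $2\cdot\lambda(W)$ are disjoint sets of positive multiples of $4$ of total size at least $m+n-1$ (Lemma~\ref{lem:sumset}), forcing some label to be at least $2(m+n-1)$; if $T_2,T_4$ are both nonempty the same argument inside the class $2\pmod 4$ gives $k\ge 2m+2n-3$; and if each side lies in a single class mod $4$, the side of size $m$ alone forces $k\ge 4m-3\ge 2m+2n-3$. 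The factor-$4$ spacing bought by the residue restriction is exactly what upgrades the naive count to the stated bound.

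The same omission undermines your rigidity step. In the paper the equality analysis of Lemma~\ref{lem:sumset} is applied not to $A$ and $B$ but to the two residue classes $\lambda(T_2),\lambda(T_4)$ inside the even side: attaining $k=2m+2n-3$ forces the odd side into a single class mod $4$, pins down $1\in\lambda(U)$ and $2m+2n-6\in\lambda(T_2)$, $2m+2n-4\in\lambda(T_4)$, and makes $\lambda(T_2),\lambda(T_4)$ arithmetic progressions with common difference $d=(4m+4n-16)/(m-2)$, whence $d\in\{4,8\}$ and $(m,n)=(2s,2)$ or $(2s,2s)$. Without the mod-$4$ decomposition there is no tight inequality whose equality case you could analyze, so the step you defer as ``presumably what Lemma~\ref{lem:sumset} is crafted to handle'' cannot be carried out along the lines you describe; that is where the mod-$4$ idea does its real work, both for the baseline bound $2m+2n-3$ and for the classification of the exceptional pairs.
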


\begin{figure}[h!]
    \centering
    \includegraphics[scale=0.6]{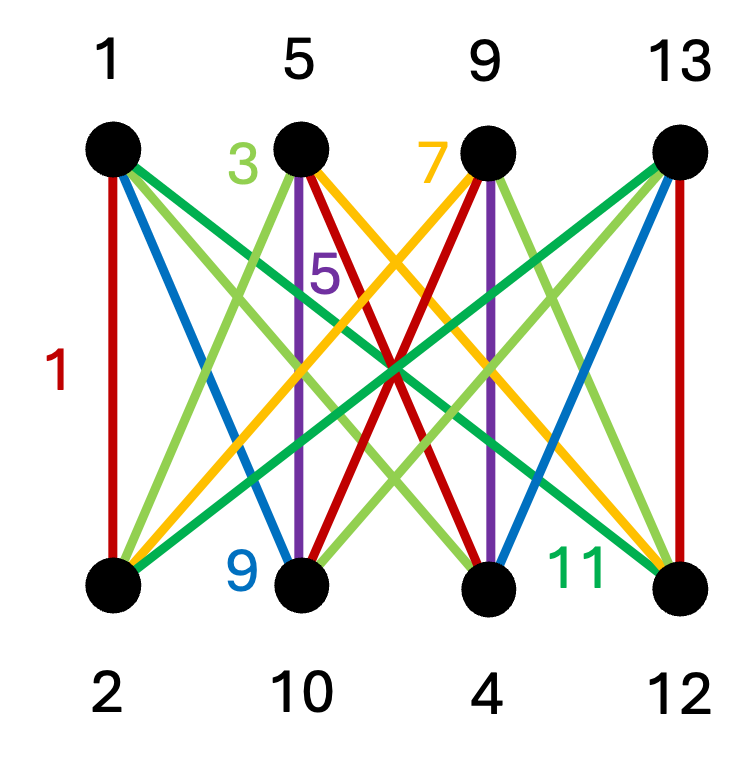}
    \caption{A 13-odd graceful coloring of $K_{4,4}$.}
\end{figure}

We further classify all $(2m+2n-3)$-odd graceful coloring of $K_{m,n}$.
\begin{theorem}\label{thm:label_complete_bipartite} Let $m,n\ge 2$.
    All possible odd graceful colorings $\lambda$ of $K_{m,n}=U \cup W$, with $|U|=m$, $|W|=n$, that realise the bound $\chi_{og}(K_{m,n})=2m+2n-3$ can be described as follows: \begin{itemize}
        \item When $(m,n)=(2s,2)$, \begin{align*}
        &\lambda(U)=\{1,4s+1\},\\
        &\lambda(W)=\{2i\colon 1\le i \le 2s\},
        \end{align*}
        \item When $(m,n)=(2s,2s)$, \begin{align*}
        &\lambda(U)=\{4i-3\colon 1\le i \le 2s\},\\
        &\lambda(W)=\{8j-6,8j-4\colon 1\le j \le s\}.
        \end{align*}
        (up to permutations between $U$ and $W$).
    \end{itemize}
\end{theorem}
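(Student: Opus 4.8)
The plan is to reduce the classification to a rigidity statement about sumsets, and then push through the equality analysis of the lower‑bound argument used for Theorem~\ref{thm:complete_bipartite}. Let $\lambda$ be an odd graceful colouring of $K_{m,n}=U\cup W$ attaining $\max\lambda=2m+2n-3$; by Theorem~\ref{thm:complete_bipartite} this forces $(m,n)=(2s,2s)$ or $(m,n)=(2s,2)$. Moreover $\lambda$ is necessarily injective on $K_{m,n}$ — a graceful colouring is a proper colouring of $G^{2}$, and $(K_{m,n})^{2}=K_{m+n}$ — so $A:=\lambda(U)$ and $B:=\lambda(W)$ are disjoint sets with $|A|=m$, $|B|=n$ and $\max(A\cup B)=2m+2n-3$; by Corollary~\ref{cor:parity}, after possibly swapping $U$ and $W$, we may assume $A$ consists of odd integers and $B$ of even integers. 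Writing out the condition that $\lambda'$ is proper at every vertex — this is the relation recorded in Lemma~\ref{lem:abcgraceful} — gives
\[
2\cdot A\cap(B+B)=\emptyset\qquad\text{and}\qquad 2\cdot B\cap(A+A)=\emptyset .
\]

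The first and main step is rigidity. I would revisit the proof of the lower bound in Theorem~\ref{thm:complete_bipartite}, isolate each appeal to Lemma~\ref{lem:sumset}, and observe that the assumption $\max(A\cup B)=2m+2n-3$ (rather than the generic $2m+2n-2$) forces every one of those sumset estimates to be sharp. The inverse part of Lemma~\ref{lem:sumset} then dictates the shape of $A$ and $B$: each must be, essentially, an arithmetic progression — or a union of very few progressions — with even common differences so as to respect parity, and with the differences further constrained by the two displayed disjointness conditions. Concretely, $A+A$ is then forced to occupy most of the even numbers of an interval, which pushes $2\cdot B$ to its two extremes, and symmetrically $2\cdot A$ must stay off the long progression $B+B$. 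Combining this with $\max(A\cup B)=2m+2n-3$, which says that the two progressions have to tile the available range, leaves only a short explicit list of candidate pairs $(A,B)$.

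The final step is to go through that list. For $(m,n)=(2s,2)$ one finds that the part of size $2$ — necessarily the odd one — must be $\{1,4s+1\}$ and the part of size $2s$ must be $\{2,4,\dots,4s\}$; the alternative parity assignment, placing the large part on the odd side, is infeasible for $s\ge2$ and merely reproduces this pair when $s=1$, which also accounts for the $K_{2,2}$ overlap with the next case. For $(m,n)=(2s,2s)$ one finds, up to swapping $U$ and $W$, that one part must be $\{4i-3:1\le i\le 2s\}$ — all elements $\equiv1\pmod{4}$, whence $A+A\equiv2\pmod{4}$ while $2\cdot B\equiv0\pmod{4}$, so that $2\cdot B\cap(A+A)=\emptyset$ holds automatically — and the other must be $\bigcup_{j=1}^{s}\{8j-6,8j-4\}$, whose pairwise sums lie in the residue classes $0,4,6\pmod{8}$ and hence avoid $2\cdot A\equiv2\pmod{8}$. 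Conversely, a direct computation confirms that each pair appearing in the statement is an odd graceful colouring realising the bound, so the list is exactly right.

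I expect the rigidity step to be the genuine obstacle: attaining $2m+2n-3$ must be shown to force $A$ and $B$ into these particular near‑arithmetic‑progression shapes, and a black‑box Freiman‑type inverse theorem does not suffice, because the extremal configurations really depend on residues modulo $4$ and modulo $8$. There are progression‑like candidates living in the wrong residue classes that satisfy one of the two disjointness conditions but violate the other, and eliminating them requires propagating residue information through the sumset inequalities of Lemma~\ref{lem:sumset} and the range bound together. The remaining bookkeeping — separating the cases $(2s,2)$ and $(2s,2s)$, and discarding within the former the parity assignment that puts the large part on the odd side — is more routine but still somewhat delicate.
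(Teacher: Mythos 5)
Your overall strategy is the same as the paper's (equality analysis of the sumset lower bound from Theorem~\ref{thm:complete_bipartite}, the inverse part of Lemma~\ref{lem:sumset}, residue constraints mod $4$ and mod $8$, then verification of the two explicit labelings), and your verification direction is fine. But the classification direction — the ``rigidity step'' — is precisely what you leave undone, and you say so yourself; as written this is a plan, not a proof. Worse, the mechanism you sketch does not work as stated: you propose to apply the equality case of Lemma~\ref{lem:sumset} so that ``each of $A=\lambda(U)$ and $B=\lambda(W)$ must be essentially an arithmetic progression,'' but in the extremal $(2s,2s)$ labeling the even part $\{8j-6,8j-4\colon 1\le j\le s\}$ is \emph{not} an arithmetic progression, and no single application of the lemma to $A+A$ or $B+B$ forces the union-of-two-APs shape. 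The device that makes the equality analysis bite is the decomposition of each part into its residue classes mod $4$ (the sets $T_1,T_2,T_3,T_4$ in the paper) and the application of Lemma~\ref{lem:abcgraceful} and Lemma~\ref{lem:sumset} to the \emph{cross} sumset $\lambda(T_2)+\lambda(T_4)$ against $2\cdot\lambda(U)$: sharpness there shows this union is exactly $\{4i-2\colon 1\le i\le m+n-1\}$, pins down boundary elements ($1\in\lambda(U)$, $2\in\lambda(T_2)$, $4\in\lambda(T_4)$, $2m+2n-6\in\lambda(T_2)$, $2m+2n-4\in\lambda(T_4)$), and yields that $\lambda(T_2),\lambda(T_4)$ are APs with a common difference $d=(4m+4n-16)/(m-2)\in\{4,8\}$, from which the two families and the exact label sets follow (with $\lambda(U)$ recovered not by a sumset argument but by counting the residues $1\pmod 4$ in $[1,2m+2n-3]$, and with the case in which the size-$2$ part sits on the even side excluded by an explicit impossibility). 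None of this bookkeeping appears in your proposal, and it cannot be recovered from the two disjointness conditions $2\cdot A\cap(B+B)=\emptyset$, $2\cdot B\cap(A+A)=\emptyset$ alone without re-introducing the mod-$4$ splitting.

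So the gap is concrete: you have correctly identified where the difficulty lies, but the step that turns ``$\max\lambda=2m+2n-3$'' into the exact label sets of the theorem is missing, and the structural claim you would need from Lemma~\ref{lem:sumset} is applied to the wrong sumsets to deliver it. To repair the argument, set up the partition of each part by residues mod $4$, show (as in the paper) that in the extremal case the odd part lies entirely in one residue class while both even residue classes are nonempty, and then run the equality analysis on $\lambda(T_2)+\lambda(T_4)$ together with the range bound; the rest of your outline (ruling out the wrong parity assignment for $(2s,2)$, and the mod-$8$ verification of the stated labelings) then goes through.
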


The next step of proving Theorem~\ref{thm:main-upper-bound} is to compute the odd gracful chromatic number of near-complete bipartite graph, that is, graph that is constructed by removing some small number edges from a complete bipartite graph. In this work, we focus on computing the odd graceful chromatic number for the family of graphs $$K_{m,n}-K_{1,r}$$ with $m,n\ge 2, 1\le r$. This graph is defined  as the graph obtained by erasing $r$ incident edges from a vertices from the bipartition of $K_{m,n}$ with size $m$. Note that in this notation, $(K_{m,n}-K_{1,r})\not \cong (K_{n,m}-K_{1,r})$ unless $m=n$ or $r=1$.

\begin{theorem}\label{thm:near-complete}
    Let $m,n \ge 2$ and $r\le n$ be positive integers. Then, we have 
    \begin{equation*}\chi_{og}(K_{m,n}-K_{1,r})=2m+2n-5
    \end{equation*} if either \begin{itemize}
        \item $(m,n)=(3,2s)$ and $2\le r\le m$,
        \item $(m,n)=(2s+1,2s)$ and $s\le r\le 2s-1$, or
        \item $(m,n)=(2s+1,2s)$, $r=s-1$ and $s\ge 5$ is odd.
    \end{itemize}
    Otherwise, \begin{equation*}
        \chi_{og}(K_{m,n}-K_{1,r})=2m+2n-4.
    \end{equation*}
\end{theorem}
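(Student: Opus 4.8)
\textbf{Proof proposal for Theorem~\ref{thm:near-complete}.}

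The plan is to reduce the computation of $\chi_{og}(K_{m,n}-K_{1,r})$ to a purely combinatorial labeling problem, then split into the ``equality'' cases (proving the sharper bound $2m+2n-5$) and the ``generic'' case (proving $2m+2n-4$). Throughout, Corollary~\ref{cor:parity} tells us that in any odd graceful coloring the two sides of the bipartition receive labels of opposite parity, so after fixing which side is ``odd'' and which is ``even'' the problem becomes: place distinct odd numbers on one side and distinct even numbers on the other, inside $\{0,1,\dots,k\}$, so that the difference $|\lambda(u)-\lambda(w)|$ is never repeated across a pair $(u,w)$ that is still an edge (the removed edges $K_{1,r}$ are exactly the pairs we are allowed to let collide). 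I will use Lemma~\ref{lem:subgraph} to restrict attention to the relevant subgraph structure, and Lemma~\ref{lem:abcgraceful} together with the sumset machinery (Lemma~\ref{lem:sumset}) to turn the ``all differences distinct'' constraint into a statement about sizes of sumsets $\lambda(U)+\bigl(-\lambda(W)\bigr)$, which governs the lower bounds.

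For the \emph{upper bound} $\chi_{og}(K_{m,n}-K_{1,r})\le 2m+2n-4$ (valid in all cases), I would construct an explicit labeling. Take the special vertex $v\in U$ whose $r$ incident edges were deleted, and note that $v$ is adjacent to only $n-r$ vertices of $W$; this gives us freedom to reuse a difference at $v$. The natural construction mimics the extremal labelings of $K_{m,n}$ from Theorem~\ref{thm:label_complete_bipartite}: label $U\setminus\{v\}$ and $W$ as in an optimal (or near-optimal) coloring of $K_{m-1,n}$ or $K_{m,n}$, then insert $v$ with a label chosen so that its $n-r$ live differences avoid all previously used differences — a counting argument shows a valid slot exists within range $2m+2n-4$ because deleting $r\ge 1$ edges frees up enough difference-values. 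For the three exceptional families, I would exhibit a tighter construction achieving $2m+2n-5$: here the arithmetic-progression-type labelings (labels $\{4i-3\}$ on one side, $\{8j-6,8j-4\}$ on the other, as in the $K_{2s,2s}$ case) leave exactly the right number of gaps, and the hypotheses on $r$ (e.g. $s\le r\le 2s-1$, or $r=s-1$ with $s\ge5$ odd) are precisely what make the remaining differences fit into an interval of length $2m+2n-5$; these are finite case checks modulo small residues, done by explicit formulas.

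For the \emph{lower bounds} I would argue by contradiction, assuming a $(2m+2n-5-\epsilon)$-coloring and deriving an impossibility. By Corollary~\ref{cor:parity}, $\lambda(U)$ and $\lambda(W)$ lie in arithmetic progressions of common difference $2$ inside $\{0,\dots,k\}$, so $|\lambda(U)|=m$ and $|\lambda(W)|=n$ force $k$ to be fairly large; the edge set $K_{m,n}-K_{1,r}$ has $mn-r$ edges, and all but a controlled number of the differences $|\lambda(u)-\lambda(w)|$ must be distinct odd numbers in $\{1,3,\dots,k\}$, of which there are only $\lceil k/2\rceil$. Comparing $mn-r$ (minus the slack from the missing vertex $v$, which can absorb at most $\binom{n-r+1}{2}$-ish repetitions, but actually at most the collisions among $v$'s own differences) against $\lceil k/2\rceil$ pins down $k\ge 2m+2n-4$ in the generic case. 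For the sharper bound $2m+2n-5$ in the exceptional cases one needs the rigidity statement of Theorem~\ref{thm:label_complete_bipartite}: the only way to come within one of the $K_{m,n}$ bound is to use (a perturbation of) the two listed extremal label-sets, and then a direct check shows deleting $r$ edges in the prescribed ranges is \emph{exactly} what is needed — too few deletions and the differences still overflow the interval, too many and the parity/progression structure breaks.

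I expect the main obstacle to be the lower-bound rigidity in the exceptional families: the clean counting bound only gives $2m+2n-4$ in general, so squeezing out the extra $-1$ requires showing that any coloring using $k=2m+2n-5$ colors is forced into one of the highly structured forms of Theorem~\ref{thm:label_complete_bipartite} (restricted to $K_{m-1,n}$ or with one vertex moved), and then checking that the allowed values of $r$ are characterized precisely by a divisibility/interval condition. This is where the sumset inequality (Lemma~\ref{lem:sumset}) does the heavy lifting: equality in the sumset bound is what pins the label-sets to arithmetic progressions, and the case analysis on $s \bmod 2$ and on $r$ relative to $s$ is the technical heart of the argument.
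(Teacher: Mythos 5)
The central gap is in your lower-bound argument. You compare the number of edges $mn-r$ with the number $\lceil k/2\rceil$ of available odd difference values, implicitly assuming that the induced edge labels must be (essentially) globally distinct. But an odd graceful coloring only requires $\lambda'$ to be a \emph{proper} edge coloring: differences must disagree only on edges sharing a vertex. In the extremal labeling of $K_{2s,2s}$ from Theorem~\ref{thm:label_complete_bipartite} there are $4s^2$ edges but only $4s-1$ odd values in range, so edge labels repeat massively; if your count were a valid constraint it would contradict Theorem~\ref{thm:complete_bipartite} itself. Consequently your ``generic'' lower bound $k\ge 2m+2n-4$ is not established by the proposal. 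The paper obtains it differently and much more cheaply: the inclusions $K_{m-1,n}\subseteq K_{m,n}-K_{1,r}\subseteq K_{m,n}-K_{1,1}$, Lemma~\ref{lem:subgraph}, Theorem~\ref{thm:complete_bipartite} and an explicit $(2m+2n-4)$-labeling of $K_{m,n}-K_{1,1}$ give $2m+2n-5\le\chi_{og}(K_{m,n}-K_{1,r})\le 2m+2n-4$, and the smaller value can only occur when $K_{m-1,n}$ is one of the exceptional graphs of Theorem~\ref{thm:complete_bipartite}, i.e.\ $(m,n)\in\{(2s+1,2),(3,2s),(2s+1,2s)\}$. Your proposal never isolates these families; in particular the family $(2s+1,2)$, which must be shown to give $2m+2n-4$ for every $r$, is not addressed at all.

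Second, the technical heart of the theorem, namely deciding within the family $(m,n)=(2s+1,2s)$ exactly for which $r$ the value $2m+2n-5$ is attained, including the parity condition on $s$ and the exception at $s=3$, is only gestured at (``a direct check''). The paper's mechanism is concrete: any $(8s-3)$-coloring restricts, on the copy of $K_{2s,2s}$ obtained by deleting the special vertex, to one of the two rigid labelings of Theorem~\ref{thm:label_complete_bipartite}; then Lemma~\ref{lem:abcgraceful} (the condition $2\lambda(b)\ne\lambda(a)+\lambda(c)$), applied with a residue analysis of the special vertex's label modulo $16$ (respectively $8$), forbids adjacency to roughly $s-1$ (resp.\ $s$) prescribed vertices on the other side, forcing $\deg(u_{2s+1})\le s+1$ and hence $r\ge s-1$, with even $s$ and $s=3$ pushed further to $r\ge s$; matching explicit constructions settle the positive cases. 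Your sumset heuristic cannot produce the $s\bmod 2$ distinction or the $s=3$ exception. Two smaller issues: Corollary~\ref{cor:parity} gives only the parity of the labels on each side, not that they form arithmetic progressions of difference $2$; and your upper-bound ``insertion'' argument asks the new vertex to avoid \emph{all} previously used differences, which is stronger than necessary and not justified by counting within the range $2m+2n-4$ --- the paper instead writes down an explicit labeling of $K_{m,n}-K_{1,1}$ and invokes subgraph monotonicity for general $r$.
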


Based on these results, we are now ready to prove Theorem~\ref{thm:main-upper-bound}
\begin{proof}[Proof of Theorem~\ref{thm:main-upper-bound}]Note that any bipartite $G$ that are not complete bipartite graph is a subgraph of a graph obtained by removing an edge from a complete bipartite graph $K_{m,n}$, for some $m\ge n\ge 2$, which is $K_{m,n}-K_{1,1}$.

Hence, by applying Lemma~\ref{lem:subgraph} and taking $r=1$ in Theorem~\ref{thm:near-complete}, we obtain \[
\chi(G)\le \chi(K_{m,n}-K_{1,1})= 2m+2n-4 = 2|V(G)|-4,
\] which completes the proof.
\end{proof}
\section{Preliminary results}\label{sec:pres}
We first recall some previous results on graceful coloring~\cite{BBELZ} and odd graceful coloring~\cite{SLHB}. Note that any odd graceful coloring is also a graceful coloring, hence some results from~\cite{BBELZ} can be readily used in our setup.

\begin{lemma}\label{lem:abcgraceful}\cite[Observation~2.4]{BBELZ}
    Let $\lambda$ be a(n odd) graceful coloring of $G$. Then, for any path $(a,b,c)$ of length three, $2\lambda(b)\neq \lambda(a)+\lambda(c)$.
\end{lemma}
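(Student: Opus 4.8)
\textbf{Proof proposal for Lemma~\ref{lem:abcgraceful}.}

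The plan is to argue by contradiction, directly from the two defining conditions of a graceful coloring: that $\lambda$ induces a proper vertex coloring (adjacent vertices get distinct labels), and that $\lambda$ induces a proper edge coloring, where the label of an edge $xy$ is $|\lambda(x)-\lambda(y)|$ (so incident edges get distinct edge labels). Suppose $(a,b,c)$ is a path of length three in $G$, i.e.\ $ab,bc\in E(G)$, and suppose for contradiction that $2\lambda(b)=\lambda(a)+\lambda(c)$.

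First I would rewrite the assumed identity as $\lambda(b)-\lambda(a)=\lambda(c)-\lambda(b)$. Taking absolute values of both sides gives $|\lambda(a)-\lambda(b)|=|\lambda(b)-\lambda(c)|$, which says exactly that the two incident edges $ab$ and $bc$ receive the same induced edge label $\lambda'(ab)=\lambda'(bc)$. Since $\lambda$ is a graceful coloring, $\lambda'$ is a proper edge coloring, so two edges sharing the vertex $b$ cannot have equal labels; this forces $\lambda'(ab)=\lambda'(bc)=0$. But then $\lambda(a)=\lambda(b)$ and $\lambda(b)=\lambda(c)$, contradicting the fact that $\lambda$ is a proper vertex coloring on the edge $ab$ (equivalently $bc$). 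Hence no such path exists, and $2\lambda(b)\neq\lambda(a)+\lambda(c)$ for every path $(a,b,c)$ of length three.

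There is essentially no obstacle here; the only point requiring a little care is the passage from the algebraic identity $2\lambda(b)=\lambda(a)+\lambda(c)$ to the statement about edge labels, and in particular noticing that equality of $|\lambda(b)-\lambda(a)|$ and $|\lambda(c)-\lambda(b)|$ follows even though $\lambda(b)-\lambda(a)$ and $\lambda(c)-\lambda(b)$ are already \emph{equal} (not merely equal in absolute value) under the hypothesis. One should also remark that the statement and proof are insensitive to whether $\lambda$ is merely a graceful coloring or an odd graceful coloring, since only the proper vertex- and edge-coloring properties are used; this is why the lemma is quoted verbatim from~\cite{BBELZ}.
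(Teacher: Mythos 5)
Your proof is correct in substance, but note that the paper itself gives no proof of this lemma: it is quoted as \cite[Observation~2.4]{BBELZ} and used as a black box, so there is nothing internal to compare against. Your argument is the natural one and matches what the cited observation rests on: from $2\lambda(b)=\lambda(a)+\lambda(c)$ one gets $\lambda(b)-\lambda(a)=\lambda(c)-\lambda(b)$, hence $\lambda'(ab)=\lambda'(bc)$ for the two distinct edges $ab$ and $bc$ incident at $b$ (distinct because $a\neq c$ on a path), and this already contradicts the requirement that $\lambda'$ be a proper edge coloring. The one blemish is the sentence ``this forces $\lambda'(ab)=\lambda'(bc)=0$'': that is a non sequitur, since nothing in the hypotheses pins the common value to $0$ (e.g.\ $\lambda(a)=0$, $\lambda(b)=1$, $\lambda(c)=2$ gives common value $1$); the correct reading is simply that equality of the two incident edge labels is itself the contradiction, and the subsequent appeal to the proper vertex coloring is unnecessary. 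With that sentence deleted your argument is exactly the intended one, and, as you say, it uses only the proper vertex- and edge-coloring properties, so it applies verbatim to odd graceful colorings.
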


\begin{lemma}\label{lem:subgraph}\cite[Lemma~2]{SLHB}
Let $H$ be a subgraph of $G$. We have $\chi_{og}(H) \le \chi_{og}(G)$.    
\end{lemma}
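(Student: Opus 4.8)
This is a monotonicity statement for the odd graceful chromatic number under taking subgraphs, and the proof should be a short restriction argument. The plan is to show that any valid $k$-odd graceful coloring of $G$ restricts to a valid $k$-odd graceful coloring of $H$, and then conclude by minimality. Concretely, suppose $\chi_{og}(G)=k<\infty$ (if $\chi_{og}(G)=\infty$ there is nothing to prove), and fix a $k$-odd graceful coloring $\lambda\colon V(G)\to\{0,1,\dots,k\}$. Since $H$ is a subgraph of $G$, we have $V(H)\subseteq V(G)$ and $E(H)\subseteq E(G)$, so we may define $\mu\coloneqq\lambda|_{V(H)}\colon V(H)\to\{0,1,\dots,k\}$.

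The next step is to verify that $\mu$ satisfies all three defining properties of a $k$-odd graceful coloring of $H$. First, the codomain condition is immediate since $\mu$ takes values in $\{0,1,\dots,k\}$. Second, $\mu$ is a proper vertex coloring of $H$: if $uv\in E(H)$ then $uv\in E(G)$, so $\lambda(u)\neq\lambda(v)$, hence $\mu(u)\neq\mu(v)$. Third, the induced edge labeling is odd: for $uv\in E(H)\subseteq E(G)$ we have $\mu'(uv)=|\mu(u)-\mu(v)|=|\lambda(u)-\lambda(v)|=\lambda'(uv)$, which is odd by hypothesis. (One should also note that the induced edge coloring property of graceful colorings is inherited in the same way, if one wants $\mu$ to literally be a graceful coloring and not merely satisfy the odd condition: adjacent edges of $H$ are adjacent edges of $G$, so their $\lambda'$-values differ.) Therefore $\mu$ is a $k$-odd graceful coloring of $H$, which gives $\chi_{og}(H)\le k=\chi_{og}(G)$.

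There is essentially no obstacle here; the only point requiring a word of care is the degenerate case $\chi_{og}(G)=\infty$, which is handled by the convention that $n\le\infty$ for all $n$, and the observation that the statement is vacuous in that case. I would present the argument in three or four sentences. It is worth remarking that this lemma is already stated in~\cite[Lemma~2]{SLHB}, so in the present paper it is simply recalled; the proof sketch above is included only for completeness.
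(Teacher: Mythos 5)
Your restriction argument is correct and complete: the paper itself gives no proof of this lemma, since it is quoted directly from~\cite[Lemma~2]{SLHB}, and the standard proof is exactly the one you give. You rightly check all the relevant conditions inherited from $E(H)\subseteq E(G)$ --- properness of the vertex coloring, oddness of the induced edge labels, and (as your parenthetical notes) distinctness of induced labels on adjacent edges --- and you handle the degenerate case $\chi_{og}(G)=\infty$ correctly, so nothing is missing.
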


Next, we also need a classical result on the sumsets of positive integers, as seen at~\cite{TV}.
\begin{lemma}\label{lem:sumset}\cite[Lemma~5.3 and Proposition~5.8]{TV} For any finite subset $A,B\subseteq \Z$, we have \begin{align*}
    |A+B|\ge |A|+|B|-1.
\end{align*} Moreover, equality holds if and only if $A$ and $B$ are arithmetic progression with same differences.
\end{lemma}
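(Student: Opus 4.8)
The proof naturally splits into the inequality and the equality characterization, and I would treat them in that order.

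\emph{The inequality.} Write $A=\{a_1<a_2<\dots<a_m\}$ and $B=\{b_1<b_2<\dots<b_n\}$. I would simply exhibit the monotone chain
\[
a_1+b_1<a_1+b_2<\dots<a_1+b_n<a_2+b_n<a_3+b_n<\dots<a_m+b_n,
\]
which consists of $m+n-1$ pairwise distinct elements of $A+B$; hence $|A+B|\ge m+n-1$. This step uses no structural hypothesis on $A$ or $B$.

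\emph{The equality case.} Here I would assume $m=|A|\ge 2$ and $n=|B|\ge 2$ (the cases with a singleton being degenerate) and suppose $|A+B|=m+n-1$; the claim to prove is that $A$ and $B$ are arithmetic progressions sharing a common difference $d>0$. The converse is immediate, since if $A$ and $B$ are progressions with common difference $d$ then $A+B$ is the progression of length $m+n-1$ with the same difference. For the forward direction I would induct on $n$. For the base case $n=2$, set $d=b_2-b_1>0$; then $A+B=(A+b_1)\cup(A+b_2)$ together with $|A+B|=m+1$ forces $|A\cap(A+d)|=m-1$, and since $a_1\notin A+d$ and $a_m+d\notin A$ this yields $\{a_2,\dots,a_m\}=\{a_1+d,\dots,a_{m-1}+d\}$, i.e.\ $A$ is an arithmetic progression of common difference $d$ (and $B$ trivially is). For the inductive step ($n\ge 3$), let $B'=B\setminus\{b_n\}$ and $B''=B\setminus\{b_1\}$. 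Since $a_m+b_n$ strictly exceeds every element of $A+B'$, we get $|A+B'|\le|A+B|-1=m+(n-1)-1$, and combined with the already-proven inequality this is an equality; the induction hypothesis then gives that $A$ is an arithmetic progression with some common difference $d$ and that $B'$ is an arithmetic progression with the same $d$. Applying the symmetric argument to $B''$ (using that $a_1+b_1$ is smaller than every element of $A+B''$) shows $B''$ is an arithmetic progression with common difference $d$ as well. As $n\ge 3$, the progressions $B'$ and $B''$ overlap in at least two terms, which forces $b_{i+1}-b_i=d$ for all $i$, so $B$ is an arithmetic progression with common difference $d$, completing the induction.

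\emph{Main obstacle.} The substantive point is the inductive step, where one must (i) check that deleting $b_n$ from $B$ removes at least one element ($a_m+b_n$) from the sumset, so that equality is inherited by the smaller instance, and (ii) reconcile the common differences coming from the two sub-instances $(A,B')$ and $(A,B'')$ — these agree precisely because both equal the common difference of the progression $A$, which is why it is convenient to extract structural information about $A$ (and not only about $B$) at each stage. Everything else is bookkeeping; alternatively one may simply cite~\cite{TV} for this classical fact and omit the argument.
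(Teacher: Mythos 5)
Your proposal is correct, and it necessarily differs from the paper, because the paper does not prove this lemma at all: it imports it verbatim from Tao and Vu~\cite{TV} (Lemma~5.3 and Proposition~5.8) and uses it as a black box. Your monotone chain $a_1+b_1<a_1+b_2<\dots<a_1+b_n<a_2+b_n<\dots<a_m+b_n$ is the standard proof of the inequality, and your equality analysis --- induction on $|B|$, peeling off $\max B$ and $\min B$ so that equality is inherited by $(A,B\setminus\{b_n\})$ and $(A,B\setminus\{b_1\})$, and using the progression structure of $A$ to force both sub-instances to carry one and the same difference $d$ --- is a clean elementary substitute for the citation; what the citation buys the paper is brevity, what your argument buys is self-containedness. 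Two small points you should tidy. First, when $n=3$ the sets $B'=B\setminus\{b_n\}$ and $B''=B\setminus\{b_1\}$ overlap in only the single term $b_2$, so the phrase ``overlap in at least two terms'' is inaccurate; it is also unnecessary, since what you actually use is that $B'$ and $B''$ are each arithmetic progressions with the same difference $d$ (the difference of $A$), whence $b_{i+1}-b_i=d$ for $i=1,\dots,n-2$ from $B'$ and for $i=2,\dots,n-1$ from $B''$, covering all consecutive gaps of $B$ irrespective of the overlap size. Second, the ``if and only if'' as stated is true for $|A|=1$ or $|B|=1$ only under the convention that a singleton is an arithmetic progression of any common difference; your remark that these cases are degenerate is exactly the convention one should record, and it is the convention under which the paper later applies the equality case to the sets $\lambda(T_2)$ and $\lambda(T_4)$.
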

Another classical result used is Brooks' theorem~\cite{Brooks} on the chromatic number $\chi$ of a graph $G$.
\begin{proposition}\label{prp:brooks}
For any graph $G$ we have \begin{align*}
    \chi(G)\le \begin{cases}
        \Delta(G), & \text{if } G \not\cong C_{2n+1} \text{ and } G \not\cong K_{n+2} \text{ for } n \in \mathbb{N},\\
        \Delta(G)+1, & \text{otherwise.}
    \end{cases} 
\end{align*}The inequality is an equality when $G\cong C_{2n+1}$ or $K_{n+2}$.
\end{proposition}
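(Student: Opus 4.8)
The plan is to reproduce the classical greedy-colouring proof of Brooks' theorem (due to Lov\'asz). Since colourings of distinct connected components can be combined, I may assume $G$ is connected; write $\Delta=\Delta(G)$. Small cases are settled directly: if $\Delta\le 2$ then $G$ is $K_1$, $K_2$, a path or a cycle, each easily coloured --- an odd cycle needs $3=\Delta+1$ colours, everything else at most two --- and the equality assertions $\chi(C_{2n+1})=3=\Delta(C_{2n+1})+1$ and $\chi(K_{n+2})=n+2=\Delta(K_{n+2})+1$ are immediate. So from now on assume $\Delta\ge 3$ and that $G$ is connected, not complete, and not an odd cycle; the goal is $\chi(G)\le\Delta$.

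Consider first the case that $G$ is \emph{not} $\Delta$-regular, and fix a vertex $v$ with $\deg(v)<\Delta$. Choose a spanning tree $T$ of $G$ rooted at $v_n:=v$ and enumerate $V(G)=\{v_1,\dots,v_n\}$ in order of non-increasing distance from the root, so that every $v_i$ with $i<n$ precedes its parent in $T$. Colour $v_1,\dots,v_n$ greedily from the palette $\{1,\dots,\Delta\}$: when $v_i$ with $i<n$ is reached, its parent is still uncoloured, so $v_i$ has at most $\Delta-1$ coloured neighbours and gets a colour; and $v_n=v$ has fewer than $\Delta$ neighbours. Hence $\chi(G)\le\Delta$.

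Now let $G$ be $\Delta$-regular. If $G$ has a cut vertex $x$, write $G=G_1\cup G_2$ with $G_1\cap G_2=\{x\}$ and both $G_i$ connected; in each $G_i$ only $x$ has degree below $\Delta$, so the previous case gives $\chi(G_i)\le\Delta$, and permuting the colours of one so that the two colourings agree at $x$ glues them into a $\Delta$-colouring of $G$. So assume $G$ is $2$-connected. The crucial claim is that there exist vertices $v_1,v_2,v_n$ with $v_1v_n,v_2v_n\in E(G)$, $v_1v_2\notin E(G)$, and $G-\{v_1,v_2\}$ connected. Granting it, root a spanning tree of $G-\{v_1,v_2\}$ at $v_n$, order the remaining vertices as $v_3,\dots,v_{n-1}$ so that each precedes its parent, and colour greedily in the order $v_1,v_2,v_3,\dots,v_n$: the non-adjacent vertices $v_1$ and $v_2$ both receive colour $1$; each $v_i$ with $2<i<n$ has an uncoloured parent and so at most $\Delta-1$ coloured neighbours; and although all $\Delta$ neighbours of $v_n$ are coloured, two of them ($v_1$ and $v_2$) share colour $1$, so at most $\Delta-1$ colours appear among them and $v_n$ can be coloured. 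This gives $\chi(G)\le\Delta$.

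It remains to prove the claim, which is the one genuinely delicate point. If $G$ is $3$-connected it is easy: since $G$ is not complete, some vertex $v_n$ has two non-adjacent neighbours $v_1,v_2$ (were every neighbourhood a clique, a connected $G$ would be complete), and $G-\{v_1,v_2\}$ stays connected because $G$ is $3$-connected. If $G$ is $2$-connected but not $3$-connected, one starts from a $2$-separator $\{x,y\}$ and uses the components of $G-\{x,y\}$ to pick a suitable $v_n$ together with a non-adjacent pair $v_1,v_2\in N(v_n)$ for which $G-\{v_1,v_2\}$ remains connected; here the hypotheses $\Delta\ge 3$ and $G$ not complete are exactly what makes the selection possible, through a short case analysis of how those components reattach via $x$ and $y$. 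I expect this sub-case --- the guarantee that $v_1,v_2$ can always be chosen so that $G-\{v_1,v_2\}$ stays connected --- to be the main obstacle; the remainder is routine bookkeeping around the greedy algorithm, together with the elementary equality checks for $C_{2n+1}$ and $K_{n+2}$ recorded above.
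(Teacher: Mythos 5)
The paper does not actually prove this proposition: it is Brooks' theorem, stated with the citation \cite{Brooks}, so the intended ``proof'' is an appeal to the literature. Your proposal instead reconstructs the classical Lov\'asz greedy-colouring proof. The outline is the standard and correct one: the equality checks for $C_{2n+1}$ and $K_{n+2}$, the reduction to connected $G$ with $\Delta\ge 3$, the non-regular case via a spanning tree rooted at a vertex of degree $<\Delta$, the cut-vertex gluing for regular non-$2$-connected $G$, and the $2$-connected case via a non-adjacent pair $v_1,v_2\in N(v_n)$ with $G-\{v_1,v_2\}$ connected, coloured first so that $v_n$ sees at most $\Delta-1$ colours. (One small caveat you inherit from the paper's loose phrasing: Brooks' theorem is a statement about connected graphs, and the componentwise reduction is only compatible with the statement because the paper tacitly assumes $G$ connected; a disjoint union such as $K_4\sqcup K_4$ would literally violate the displayed inequality.)

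The genuine gap is the one you flag yourself: in the case where $G$ is $\Delta$-regular, $2$-connected but not $3$-connected, you assert without proof that $v_n$ and a non-adjacent pair $v_1,v_2\in N(v_n)$ with $G-\{v_1,v_2\}$ connected can be found ``through a short case analysis,'' and this is precisely the non-routine heart of Brooks' theorem, so as written the argument is incomplete. The standard way to close it: pick a $2$-separator, one of whose vertices $v_n$ has the property that $G-v_n$ is connected but has a cut vertex; consider two end-blocks $B_1,B_2$ of $G-v_n$. Since $G$ is $2$-connected, $v_n$ has a neighbour $v_1\in B_1$ and a neighbour $v_2\in B_2$, each chosen to be a non-cut vertex of $G-v_n$; then $v_1v_2\notin E(G)$ because they lie in distinct blocks and neither is a cut vertex, $G-v_n-v_1-v_2$ is connected because deleting non-cut vertices of distinct end-blocks does not disconnect $G-v_n$, and $v_n$ remains attached because $\deg(v_n)=\Delta\ge 3$ gives it a third neighbour outside $\{v_1,v_2\}$. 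With that lemma supplied your proof is complete; alternatively, for the purposes of this paper it suffices to cite \cite{Brooks} as the authors do.
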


\section{Upper bounds with respect to some chromatic numbers}\label{sec:chromatic-p}
\subsection{Proof of Theorem~\ref{thm:upper_bound}}
We prove that \begin{align*}
    \chi_{og}(G) \le 2(\chi(G^2[U]) + \chi(G^2[W]) - 1),
\end{align*} for any bipartite graph $G$ with bipartition $U$ and $W$.

Let $k = 2(\chi(G^2[U]) + \chi(G^2[W]) - 1)$. Let $V(G) = U \cup W$ be a bipartition of $G$ such that $U = \{u_1,\ldots,u_m\}$ and $W = \{w_1,\ldots,w_n\}$ for some positive integer $m$ and $n$. Let $\psi_1 : V(G^2[U]) \to [1,\chi(G^2[U])]$ and $\psi_2 : V(G^2[W]) \to [1,\chi(G^2[W])]$ be a vertex coloring of $G^2[U]$ and $G^2[W]$ respectively. Define a vertex labeling $\varphi : V(G) \to [1,k]$ where
\begin{align*}
    \varphi(u_i) & = 2 \cdot \psi_1(u_i) - 1,\\
    \varphi(w_j) & = 2(\psi_2(w_j) + \chi(G^2[U]) - 1).
\end{align*}
We now prove that $\varphi$ is an odd graceful coloring of $G$.

Observe that $\varphi(x) \ne \varphi(y)$ for any $xy \in E(G^2)$. Since $\varphi(u_i) < \varphi(w_j)$ for any $i$ and $j$, we obtain $|\varphi(u_i) - \varphi(w_j)| = |\varphi(u_\ell) - \varphi(w_j)|$ if and only if $\varphi(u_i) = \varphi(u_\ell)$ for any $l$. Similarly, $|\varphi(u_i) - \varphi(w_j)| = |\varphi(u_i) - \varphi(w_\ell)|$ if and only if $\varphi(w_j) = \varphi(w_\ell)$ for any $l$. Consider the edges $xy$ and $yz$ in $G$. Then $d(x,z) = 2$ which implies $xz \in E(G^2)$. Hence, $\varphi(x) \ne \varphi(z)$ which implies $|\varphi(x) - \varphi(y)| \ne |\varphi(z) - \varphi(y)|$.

This shows that $\varphi$ is an odd graceful coloring of $G$, with $\max(\varphi) = k$, which completes the proof.

\subsection{Proof of Corollary~\ref{cor:mob}}

Let $U$ and $W$ be the bipartitions of $M_{2n}$. Observe that 
\begin{align*}
    (M_{2n})^2[U] \cong Ci_n(1,2) \cong (M_{2n})^2[W].
\end{align*}

Now, let us consider the chromatic numbers of $Ci_n(1,2)$ when $n$ is odd. If $n = 5$, then $Ci_n(1,2) \cong K_5$ which implies $\chi(Ci_5(1,2)) = 5$. By Theorem \ref{thm:upper_bound}, it follows that
\begin{align*}
    \chi_{og}(M_{10}) \le 2(5 + 5 - 1) = 18.
\end{align*}

Next, consider $n \equiv 3 \pmod 6$. Define a map $f : V(Ci_n(1,2)) \to \{1,2,3\}$ given by $f(u_i) = i \pmod 3$. This is a proper coloring of $Ci_n(1,2)$ since any two vertices with the same color $u_i$ and $u_j$ implies $i \equiv j \pmod 3$ which yields $|i-j| \notin \{1,2\}$ or equivalently $u_i$ and $u_j$ are not adjacent. Therefore, $\chi(Ci_n(1,2)) \le 3$ which implies
\begin{align*}
    \chi_{og}(M_{2n}) \le 2(3 + 3 - 1) = 10
\end{align*}
due to Theorem \ref{thm:upper_bound}.

Lastly, let $n \equiv 1,5 \pmod 6$ where $n > 5$.
Consider a map $h : V(Ci_n(1,2)) \to \{1,2,3,4\}$ defined by
\begin{align*}
    h(u_i) = \begin{cases}
        1, & \text{if } i = 3,\\
        2, & \text{if } i = 4,\\
        3, & \text{if } i = 2,\\
        4, & \text{if } i \in \{1,5\},\\
        f(u_i), & \text{otherwise}.
    \end{cases}
\end{align*}
We will show that $h$ is a proper vertex coloring of $Ci_n(1,2)$. First, observe that the only two vertices of color $4$ are $u_1$ and $u_5$ where these two are not adjacent. Next, note that $h(u_6) = 3$ and $h(u_n) \in \{1,2\}$. Therefore, it is easy to see that any two adjacent vertices $u_i$ and $u_j$ where $i,j \in [1,6]\cup\{n\}$ have distinct colors. Furthermore, if $u_i$ and $u_j$ have the same color and $i,j \notin [1,5]$, then $u_i$ and $u_j$ are not adjacent since $f$ is a proper vertex coloring. This shows that $h$ is a proper coloring of $Ci_n(1,2)$ which implies $\chi(Ci_n(1,2)) \le 4$. Again, by Theorem \ref{thm:upper_bound}, it holds that
\begin{align*}
    \chi_{og}(M_{2n}) \le 2(4 + 4 - 1) = 14,
\end{align*}
which completes the proof.

\subsection{Proof of Theorem~\ref{thm:D}}

Let $v \in W$ be fixed. Then $\deg_G(v) \le \Delta(G)$. Each neighbors of $v$ is also adjacent to at most $\Delta(G) - 1$ vertices other than $v$. It follows that
\begin{align*}
    |\{v' \mid v' \in W, d(v',v) = 2\}| \le \Delta(G)(\Delta(G)-1).
\end{align*}
Since $v$ is chosen at random, therefore $\Delta(G^2[W]) \le \Delta(G)(\Delta(G) - 1)$. Using similar argument, we also have $\Delta(G^2[U]) \le \Delta(G)(\Delta(G)-1)$.

Applying Brooks' Theorem in Proposition~\ref{prp:brooks}, it follows that
\begin{align*}
    \chi_{og}(G) & \le 2(\chi(G^2[U]) + \chi(G^2[W]) - 1),\\
    & \le 2(\Delta(G^2[U]) + 1 + \Delta(G^2[W]) + 1 - 1),\\
    & \le 2(2\Delta(G)(\Delta(G)-1) + 1),\\
    & \le 4(\Delta(G))^2 - 4\Delta(G) +2.
\end{align*}

Now, let $G^2[U], G^2[W] \not\cong C_{2n+1}$ and $G^2[U], G^2[W] \not\cong K_{n+2}$ for $n \in \mathbb{N}$. Then $\chi(G^2[U]) \le \Delta(G)$ and $\chi(G^2[W]) \le \Delta(G)$ due to Brook's Theorem. In this case, it holds that
\begin{align*}
    \chi_{og}(G) & \le 2(\chi(G^2[U]) + \chi(G^2[W]) - 1),\\
    & \le 2(\Delta(G^2[U]) + \Delta(G^2[W]) - 1),\\
    & \le 2(2\Delta(G)(\Delta(G)-1) - 1),\\
    & \le 4(\Delta(G))^2 - 4\Delta(G) -2,
\end{align*}
which completes the proof.

\subsection{Proof of Proposition~\ref{thm:D2}}
Let $\diam(G) \ge 5$. Assume that for every pair of vertices $x,y \in U$ it holds that $d(x,y) = 2$. Fix any two vertices $w,z \in W$. Let $w' \in N(w)$ and $z' \in N(z)$. Since $\{w',z'\} \subseteq U$, then there exists $t \in W$ such that $t \in N(w') \cap N(z')$. It follows that
\begin{align*}
    w \sim w' \sim t \sim z' \sim z
\end{align*}
which shows that $d(w,z) \le 4$. Since $w$ and $z$ are chosen at random, this implies that $\diam(G) \le 4$ which is a contradiction. Therefore, there will always exists two vertices in $U$ with the distance larger than 2. This implies $G^2[U]$ is not isomorphic to a complete graph. Using similar argument, $G^2[W]$ is also not isomorphic to a complete graph.

Further, if $|E(G)| > 2|W|$ then by pigeonhole principle there exists $x \in W$ such that $\deg_G(x) \ge 3$. Hence, at least three neighbors of $x$ in $U$ forms a triangle in $G^2[U]$. Since $|U| \ge 4$, then $G^2[U]$ is not isomorphic to a cycle. Since $|E(G)| > 2|W| \ge 2|U|$, then $G^2[W]$ is also not isomorphic to a cycle.

This shows that $G^2[U], G^2[W] \not\cong C_{2n+1}$ and $G^2[U], G^2[W] \not\cong K_{n+2}$ for $n \in \mathbb{N}$. It follows that $\chi_{og}(G) \le 4(\Delta(G))^2 - 4\Delta(G) - 2$ by Theorem~\ref{thm:D} which completes the proof.

\section{Exact value of some odd coloring chromatic numbers}\label{sec:near-complete-p}

\subsection{Proof of Theorem~\ref{thm:complete_bipartite}}

We first prove that \begin{align}\label{eqn:lower_bound_2m+2n-3}
    \chi_{og}(K_{m,n})\ge 2m+2n-3.
\end{align}Without loss of generality, suppose $m\ge n$.
    Let  $\lambda$ be a $k$-odd graceful coloring of $K_{m,n}$. For $i=1,2,3,4$ define \begin{align*}
    \begin{split}
        T_{i}\coloneqq \{t \in V(G) \colon \lambda(t)\equiv i\pmod 4\}.
    \end{split}
    \end{align*} We also define $\lambda(T_i)\coloneqq \{ \lambda(t_i)\colon t_i\in T_i\}$. Trivially, \begin{align*}
       k=\max_{i=1,2,3,4}(\lambda(T_i)).
    \end{align*}
    
   We may see that $T_1$, $T_2$, $T_3$, $T_4$ partition $V(G)$. Furthermore, by using Corollary~\ref{cor:parity}, the sets $T_1 \cup T_3=U$ and $T_2 \cup T_4=W$ is a bipartition of $K_{m,n}$. Note that we have either \begin{align}\label{eqn:V1V2}
       (|U|,|W|)=(m,n) \quad \text{or}\quad (n,m).
   \end{align}
   Also, we note that at least one of $T_1$ and $T_3$, and also $T_2$ and $T_4$ are nonempty.

   We now divide the cases based on the number of nonempty sets from $T_1$, $T_2$, $T_3$ and $T_4$. From the last observation, at most two of them are empty. As the first (and worst) case, suppose that $T_3$ and $T_4$ are empty. Note that $T_1=U$ and $T_2=W$. From~\eqref{eqn:V1V2}, we note that either $|U|=m$ or $|W|=m$. We consider each case separately. First, suppose that $|U|=m$. We note that all elements in $U=T_1$ is $1$ modulo $4$. Since there are $m$ different numbers in $T_1$, in this case we have \begin{align*}
     k\ge  \max(\lambda(u)\colon u\in U) \ge 4m-3 \ge 2(m+n)-3.
   \end{align*} If $|W|=m$, with similar arguments we obtain \begin{align*}
       k\ge \max(\lambda(w)\colon w\in W) \ge 4m-2 \ge 2(m+n)-2.
   \end{align*} In either case,~\eqref{eqn:lower_bound_2m+2n-3} holds true, which completes the proof when $T_3$ and $T_4$ are empty.

   For the rest of the case where exactly two sets are empty, we note that if $T_1$ is empty (and $U=T_3$) we obtain $k\ge 4m-3 \ge 2(m+n)-1$. Also, if $T_2$ is empty, we obtain $k\ge 4m\ge 2(m+n)$. 
   
   Hence, if two of these four sets are empty, \begin{align}\label{eqn:lower_bound_4m-3}
       k\ge 4m-3 \ge 2(m+n)-3,
   \end{align}
   which proves~\eqref{eqn:lower_bound_2m+2n-3}.
   
   Next, we proceed to the case where at most one of the sets $T_1$, $T_2$, $T_3$ and $T_4$ are nonempty. Note that this case implies that either both $T_1$ and $T_3$ are nonempty, or both $T_2$ and $T_4$ are nonempty. We now consider this case according to these subcases instead.

     First, suppose that $T_1$ and $T_3$ are nonempty.  From Lemma~\ref{lem:abcgraceful}, we have that if $\lambda$ is a graceful coloring, for any $t_1\in T_1$, $t_3\in T_3$ and $w\in W$, \[
    \lambda(t_1)+\lambda(t_3)\ne 2 \lambda(w).
    \] This implies that the set $\lambda(T_1)+\lambda(T_3)$ and $2\cdot \lambda(W)$ are disjoint. 
    Therefore, we have the inequality \begin{align}\label{eqn:U1U3}\begin{split}
|\lambda(T_1)+\lambda(T_3)|+|2\cdot\lambda(W)| &\ge |\lambda(T_1)|+(|U|-|\lambda(T_1)|)-1+|W|\\&=m+n-1,
    \end{split}\end{align}
        where the first inequality comes from Lemma~\ref{lem:sumset}. This  implies \begin{align*}
            |[\lambda(T_1)+\lambda(T_3)] \cup2\cdot\lambda(W)| \ge m+n-1.
        \end{align*}
Next, note that all numbers that are in one of theses sets are positive numbers that are divisible by 4. Therefore, \begin{align}\label{eqn:maxU1U3}
    \max([\lambda(T_1)+\lambda(T_3)] \cup2\cdot\lambda(W)) \ge 4(m+n-1).
\end{align}
From the last inequality, we consider two separate cases: \begin{itemize}
    \item If $\max 2 \cdot\lambda(W) \ge 4(m+n-1)$, then $\max \lambda(W) \ge 2m+2n-2 $. This implies there exists a vertex $w$ with $\lambda(w)\ge 2m+2n-2$, which in turn implies $k \ge 2m+2n-2$.
    \item If $\max [\lambda(T_1)+\lambda(T_3)] \ge 4(m+n-1)$, then at least one of $\lambda(T_1)$ and $\lambda(T_3)$ has an element whose value is at least $2(m+n-1)$. This also implies $k\ge 2m+2n-2$.
\end{itemize}
Therefore, in either case we have $k\ge 2m+2n-2$, which completes the proof in the case where $T_1$ and $T_3$ are nonempty.

Next, suppose that $T_2$ and $T_4$ are both nonempty. We may apply the same argument as in the inequality~\eqref{eqn:U1U3} to the sets $T_2$, $T_4$  and $U$ to obtain \begin{align*}
\begin{split}|\lambda(T_2)+\lambda(T_4)|+|2\cdot\lambda(U)| &\geq |\lambda(T_2)|+(|U|-|\lambda(T_2)|)-1+|U|\\
    &= m+n-1.
\end{split}
\end{align*}
Since the numbers in these sets are $2$ modulo $4$, this implies an inequality similar to~\eqref{eqn:maxU1U3}, \begin{align}\label{eqn:maxU2U4}
    \max([\lambda(T_2)+\lambda(T_4)] \cup2\cdot\lambda(U)) \ge 4(m+n-1)-2.
\end{align} By applying a similar argument, we also have \begin{align*}
   k= \max(\lambda(T_2),\lambda(T_4),\lambda(U))\ge 2m+2n-3
\end{align*} if both of $T_2$ and $T_4$ are nonempty, which completes the proof of~\eqref{eqn:lower_bound_2m+2n-3}.

Next, from Theorem~\ref{eqn:vertices} and~\eqref{eqn:lower_bound_2m+2n-3}, we have \begin{equation*}
    \chi(K_{m,n})\in \{2m+2n-3,2m+2n-2\}.
\end{equation*}
We now classify all complete bipartite graph $K_{m,n}$ such that $\chi_{og}(K_{m,n})=2m+2n-3$. We stil let $m\ge n$.

Let $\lambda$ be a $k$-odd graceful labeling and consider the corresponding vertices set $T_1$, $T_2$, $T_3$, $T_4$. Note that from~\eqref{eqn:lower_bound_4m-3}, if two of these sets are empty, \begin{align*}
    k\ge 4m-3>2(m+n)-3.
\end{align*} Therefore, any $(2m+2n-3)$-odd graceful labeling of $K_{m,n}$ is not of this form.

Next, from the previous observation, if both $T_1$ and $T_3$ are nonempty, we have \begin{align*}
    k\ge 2m+2n-2.
\end{align*} Therefore, any $(2m+2n-3)$-odd graceful labeling for $K_{m,n}$ is also not of this form.

Now, let $\lambda$ be a $(2m+2n-3)$-odd graceful labeling of $K_{m,n}$. From the previous observations, we have that $T_2$ and $T_4$ are nonempty, but one of $T_1$ and $T_3$ are empty. Note that the equality case in~\eqref{eqn:maxU2U4} is attained in this case. Recall that all elements in the corresponding set $[\lambda(T_2)+\lambda(T_4)] \cup2\cdot\lambda(U)$ are $2$ modulo $4$. Therefore, since equality is attained, \begin{align}\label{eqn:explicit}
    [\lambda(T_2)+\lambda(T_4)] \cup2\cdot\lambda(U) = \{4i-2\: \colon 1\le i \le m+n-1\}.
\end{align}
Next, we note that $2\notin \lambda(T_2)+\lambda(T_4)$. Therefore, $2\in  2\cdot\lambda(U)$ and $1\in \lambda(U)$. This would imply $T_1$ is nonempty and $T_3$ is empty, and $U=T_1$. Therefore,  \begin{equation}\label{eqn:2|m+n}
    2m+2n-3 \in \lambda(T_1)\implies 2|m+n.
\end{equation} 
Also, since all elements of $U$ are $1$ modulo $4$,  \begin{align*}
    |U| \le (m+n)/2 \le m \implies |U|=n,\: |W|=m.
\end{align*}
We now determine $\lambda(T_2)$ and $\lambda(T_4)$ explicitly. First, since $T_3$ is empty, $6\notin 2\lambda(T_1)=2\cdot\lambda(U)$, which implies $6\in \lambda(T_2)+\lambda(T_4)$. This easily implies \begin{align*}
    2\in \lambda(T_2),\quad 4\in \lambda(T_4).
\end{align*}

Next, we consider the second-largest element of~\eqref{eqn:explicit}, $4(m+n-2)-2=4(m+n)-10$. Not that this element is $6$ modulo $8$, since $4|2(m+n)$. Therefore, $4(m+n)-10\notin 2\cdot\lambda(U)$, which implies $4m+4n-10 \in \lambda(T_2)+\lambda(T_4)$. However, note that the largest possible elements of $\lambda(T_2)$ and $\lambda(T_4)$ are $2m+2n-6$ and $2m+2n-4$, respectively. From these, we in fact have\begin{equation}\label{eqn:2m+2n-6,4}
    2m+2n-6 \in \lambda(T_2),\quad 2m+2n-4\in \lambda(T_4).
\end{equation}

Now, from Lemma~\ref{lem:sumset}, since the equality is attained at Equation~\ref{eqn:maxU2U4}, we have that $\lambda(T_2)$ and $\lambda(T_4)$ are two arithmetic progressions with same differences $d$. Counting the cardinalities of these sets, we obtain\begin{align}\label{eqn:d}\begin{split}
    &\dfrac{(2m+2n-6-2)}{d} +1 + \dfrac{(2m+2n-4-4)}{d} +1 =  m \\ \iff  &d= \dfrac{4m+4n-16}{m-2}.
\end{split}
\end{align} 
Since $m\ge n$, we have \begin{align*}
    d\le \dfrac{8m-16}{m-2}=8.
\end{align*} Note that all elements in $\lambda(T_2)$ are $2$ modulo $4$, thus $4|d$. Therefore, \begin{align*}
    d\in \{4,8\}.
\end{align*}

When $d=4$, we obtain, from~\eqref{eqn:d}, \begin{align*}
    4m-8=4m+4n-16 \iff n=2.
\end{align*} In this case, we also have $m$ is even from~\eqref{eqn:2|m+n}.

Next, when $d=8$, we obtain, from~\eqref{eqn:d}, \begin{align*}
    8m-16=4m+4n-16 \iff m=n.
\end{align*}
In addition, all elements in $\lambda(T_2)$ are congruent modulo $8$. Hence, \begin{align*}
    8|(2m+2n-6)-2 \iff 2|n.
\end{align*}

Therefore, we obtain \begin{align}\label{eqn:condition_2m+2n-3}
    \chi_{og}(K_{m,n})=2m+2n-3 \implies (m,n)=(2s,2s)\text{ or }(2s,2),\:s\in \N,
\end{align}
This completes the proof of the second equation in Theorem~\ref{thm:complete_bipartite}. Next, we proceed to show the converse of~\eqref{eqn:condition_2m+2n-3} by constructing the corresponding odd graceful labeling.

First, when $(m,n)=(2s,2)$, we consider the graph labeling $\varphi\colon V(K_{m,2})\to [1,2m+1] $ defined as \begin{equation}\label{eqn:labeling-2s,2}
    \varphi(u_i)=2i,\:1\le i\le m,\quad \varphi(w_1)=1,\quad \varphi(w_2)=2m+1.
\end{equation}
This labeling can be checked to be an odd graceful labeling, which completes the proof of Theorem~\ref{thm:complete_bipartite} in this case.

Next, when $m=n=2s$ for some positive integer $s$, we consider the graph labeling $\varphi\colon V(K_{2s,2s})\to [1,8s-3] $ defined as \begin{align}\label{eqn:labeling-2s,2s}\begin{split}
    &\varphi(u_i)=\begin{cases}
        8i-6,\: &1\le i \le s,\\
        8(i-s)-4,\: &s+1\le i\le 2s, 
    \end{cases}\\
    &\varphi(w_i)=4i-3,\:1\le i\le m.
\end{split}
\end{align}
We now prove that this labeling is a $(2m+2n-3)$-odd labeling. Consider the vertices $w_i$ for $i \in [1,m]$. Suppose there exists $j,k \in [1,2s], j\ne k$ such that $|\varphi(w_i)-\varphi(u_j)| = |\varphi(w_i)-\varphi(u_k)|$. Since $\varphi(u_j) \ne \varphi(u_k)$, we have $2\varphi(w_i) = \varphi(u_j) + \varphi(u_k)$. 

We now divide the cases according to the size of $j$ and $k$. If $j \in [1,s]$ and $k \in [s+1,2s]$ then
\begin{align*}
    2\varphi(w_i) & = \varphi(u_j) + \varphi(u_k),\\
    2(4i - 3) & = 8j - 6 + 8(k-s) - 4,\\
    8(i-j-k+s) & = -4,
\end{align*}
which is impossible. If $j,k \in [1,s], j \ne k$ then
\begin{align*}
    2\varphi(w_i) & = \varphi(u_j) + \varphi(u_k),\\
    2(4i - 3) & = 8j - 6 + 8k - 6,\\
    8(i-j-k) & = -6,
\end{align*}
which is a contradiction. Otherwise, if $j,k \in [s+1,2s], j \ne k$ then
\begin{align*}
    2\varphi(w_i) & = \varphi(u_j) + \varphi(u_k),\\
    2(4i - 3) & = 8(j-s) - 4 + 8(k-s) - 4,\\
    8(i-j-k+2s) & = -2,
\end{align*}
which again is impossible. Therefore $|\varphi(w_i)-\varphi(u_j)| \ne |\varphi(w_i)-\varphi(u_k)|$ for any $j,k \in [1,2s], j \ne k$. Now, consider the vertices $u_i$ for $i \in [1,2s]$. Suppose there exists $j,k \in [1,m], j\ne k$ such that $|\varphi(u_i)-\varphi(w_j)| = |\varphi(u_i)-\varphi(w_k)|$. Since $\varphi(w_j) \ne \varphi(w_k)$, we have $2\varphi(u_i) = \varphi(w_j) + \varphi(w_k)$. 

We now divide the cases according to the size of $i$. If $i \in [1,s]$ then
\begin{align*}
    2\varphi(u_i) & = \varphi(w_j) + \varphi(w_k),\\
    2(8i-6) & = 4j - 3 + 4k - 3,\\
    4(4i-j-k) & = 6,
\end{align*}
which is a contradiction. Otherwise, if $i \in [s+1,2s]$ then
\begin{align*}
    2\varphi(u_i) & = \varphi(w_j) + \varphi(w_k),\\
    2(8(i-s)-4) & = 4j - 3 + 4k - 3,\\
    4(4i-4s-j-k) & = 2,
\end{align*}
which is impossible. This implies $|\varphi(u_i)-\varphi(w_j)| = |\varphi(u_i)-\varphi(w_k)|$ for any $j,k \in [1,m], j\ne k$. This shows that $\varphi$ is an odd graceful labeling of $K_{2s,2s}$.

\subsection{Proof of Theorem~\ref{thm:label_complete_bipartite}} 
First, let $m,n\in \N$ such that $\chi_{og}(K_{m,n})=2m+2n-3$. We recall that either $n=2$ and $m=2s$ or $m=n=2s$, for some $s\in \N$. 

We recall the arguments of the previous section, especially~\eqref{eqn:2m+2n-6,4} and \eqref{eqn:d}. First, suppose that $n=2$ and $m=2s$. Note that from~\eqref{eqn:explicit},~\eqref{eqn:2|m+n} and~\eqref{eqn:2m+2n-6,4}, \[
1, 4s+1 \in \lambda(T_1), \quad 4s-2 \in \lambda(T_2), \quad 4s \in \lambda(T_4).
\] In order to find all possible labeling, we first note that either $(|U|,|W|)=(m,2)$ or $(2,m)$.

First, assume that $|W|=2$. From~\eqref{eqn:d} and previous observations, this would imply $\lambda(U)=\lambda(T_1)$ forms an arithmetic progression of size $m=2s$, difference $4$, and largest element $4s+1$. This is impossible, hence $(|U|,|W|)=(2,m)$.

Returning to~\eqref{eqn:d} in this case, we note that $\lambda(T_2)$ and $\lambda(T_4)$ are each arithmetic progressions of difference $4$ and largest element $4s-2$ and $4s$, respectively. From these information, we obtain $|\lambda(T_2)|,|\lambda(T_4)|\le s$. Since $|W|=m=2s=|T_2|+|T_4|$, the last inequalities are, in fact, equalities. Furthermore, we can also obtain \begin{align*}
    &\lambda(T_2)=\{4i-2\colon 1\le i \le s\},\\
    &\lambda(T_4)=\{4i\colon 1\le i \le s\}.
\end{align*} Note that now we have all vertices of $K_{m,n}$ labeled. To complete the argument, we only need to check whether this labeling is an odd graceful labeling, which is done at~\eqref{eqn:labeling-2s,2}.

Next, we consider the case $m=n=2s$. From~\eqref{eqn:d}, we obtain $\lambda(T_2)$ and $\lambda(T_4)$ are arithmetic progressions of common difference $8$. Arguing similarly as in the previous part, we have \begin{align*}
    &\lambda(T_2)=\{8i-6\colon 1\le i \le s\},\\
    &\lambda(T_4)=\{8i-4\colon 1\le i \le s\}.
\end{align*} We also note that all elements in $\lambda(U)$ are $1$ mod 4, and lie between $1$ and $2m+2n-3=8s-3$. Since there are exactly $2s$ numbers that are $1$ modulo $4$ between these intervals, we obtain \[
\lambda(U)=\{4i-3\colon 1\le i \le 2s\}.
\] This labeling is an odd graceful labeling from~\eqref{eqn:labeling-2s,2s}, which completes our proof for this case.

\subsection{Proof of Theorem~\ref{thm:near-complete}}
\subsubsection{Removing edges from a complete bipartite graph}Suppose $m\ge n\ge 2$.  We first consider the case $r=1$, where we prove \begin{equation}\label{eqn:complete-minus-one}
        \chi_{og}(K_{m,n} -K_{1,1}) \le 2m+2n-4.
    \end{equation}

We first provide $(2m+2n-4)$-odd  graceful labeling $\varphi$ of $G=K_{m,n}-K_{1,1}$.
We  label the graph vertices as $U = \{u_1,u_2,\ldots,u_m\}$ and $W = \{w_1,w_2,\ldots,w_n\}$ such that $U$ and $W$ are bipartitions of $G$ and $u_iw_j \in E(G)$ for all $1\le i\le m,\: 1\le j\le n$, except for $u_mw_1$.
Now, let $\varphi : V(G) \to [1,2m+2n-4]$ be a vertex labeling with
\begin{align*}
    \varphi(u_i) & \coloneqq 2i - 1,\\
    \varphi(w_j) & \coloneqq 2(j + m - 2).
\end{align*}
We now show that $\varphi$ is an odd graceful coloring of $G$.

For $i \in [1,m]$, consider two edges $u_iw_j$ and $u_iw_k$ for any $1 \le j < k \le m$. If $i \le m-1$, then $\varphi(w_k) > \varphi(w_j) > \varphi(u_i) \ge 0$ which implies 
\begin{align*}
    |\varphi(w_j) - \varphi(u_i)| < |\varphi(w_k) - \varphi(u_i)|.
\end{align*}
Since $u_mw_1 \notin E(G)$, if $i = m$ then we need to only consider the case where $2 \le j < k \le n$. Here, we have $\varphi(v_k) > \varphi(w_j) > \varphi(u_m) \ge 0$. Again, it holds that 
\begin{align*}
    |\varphi(w_j) - \varphi(w_m)| < |\varphi(w_k) - \varphi(w_m)|.
\end{align*}

Similarly for $j \in [1,n]$, consider two edges $u_iw_j$ and $u_kw_j$ for any $1 \le i < k \le n$. If $j \ge 2$, then $\varphi(w_j) > \varphi(u_k) > \varphi(u_i)$. It follows that
\begin{align*}
    |\varphi(w_j) - \varphi(u_k)| < |\varphi(w_j) - \varphi(u_i)|.
\end{align*}
If $j = 1$, it is sufficient to only consider $1 \le i < k \le m-1$. It holds that $\varphi(w_j) > \varphi(u_k) > \varphi(u_i)$. Hence, we have
\begin{align*}
    |\varphi(w_1) - \varphi(u_k)| < |\varphi(w_1) - \varphi(u_i)|.
\end{align*}

Therefore, $\varphi$ is an odd graceful coloring of $G$, with $\max(\varphi) = 2m+2n-4$. This implies $\chi_{og}(K_{m,n} -K_{1,1}) \le 2m+2n-4$ which completes the proof of~\eqref{eqn:complete-minus-one}.

Next, we bound $\chi_{og}(K_{m,n}-K_{1,r})$ for any $r\le n$.  We first note that \begin{align*}
       K_{m-1,n}\subseteq (K_{m,n}-K_{1,r})\subseteq (K_{m,n}-K_{1,1}),
   \end{align*} viewed as subgraph inclusions. Applying Lemma~\ref{lem:subgraph}, Theorem~\ref{thm:complete_bipartite} and Equation~\ref{eqn:complete-minus-one}, we obtain \begin{align}\label{eqn:2m+2n-5/4}\begin{split}
     2m+2n-5 &\le \chi_{og}(K_{m-1,n})\\ &\le \chi_{og}(K_{m,n}-K_{1,r}) \\&\le \chi_{og}(K_{m,n}-K_{1,1})\\ &\le 2m+2n-4.
   \end{split}
   \end{align}  The last inequality gives an upper and lower bound for $\chi_{og}(K_{m,n}-K_{1,r})$, which we will improve in the next sections.

\subsubsection{Graphs satisfying the lower bound} Based on~\eqref{eqn:2m+2n-5/4}, we obtain for all $m,n\ge 2,r\le n$ \[
\chi_{og}(K_{m,n}-K_{1,r})\in \{2m+2n-5,2m+2n-4\}
\]
    We now classify all possible triples $(m,n,r)$  with $\chi_{og}(K_{m,n}-K_{1,r})=2m+2n-5$ First, from Theorem~\ref{thm:complete_bipartite}, note that if $K_{m-1,n}\not \cong K_{2s,2s}$ or $K_{2s,2}$ for some $s\ge 1$, then the lower bound of~\eqref{eqn:2m+2n-5/4} can be improved to $2m+2n-4$. Thus, in this case $\chi_{og}(K_{m,n}-K_{1,r})=2m+2n-4$.  

   Therefore, if $\chi_{og}(K_{m,n}-K_{1,r})=2m+2n-5$, then there exists a positive integer $s$ with \[
   (m,n)  \in \{(2s+1,2),(3,2s),(2s+1,2s)\}
   \] We consider the first two possibilities in this section, and the last case in the next section. However, we first handle the case $s=1$, which in this case corresponds to all of these pairs. We may only need to check the case $r=1$, where we easily obtain \[
   \chi_{og}(K_{3,2}-K_{1,1})=6.
   \]
   Therefore, from now  we let $s\ge 2$.
   
First, when $(m,n)=(2s+1,2)$  we prove that for all $r\le 2s$, \begin{equation}\label{eqn:2s+1,2}\chi_{og}(K_{2s+1,2}-K_{1,r})=4s+2=2m+2n-4.\end{equation}
With respect to Lemma~\ref{lem:subgraph}, we only need to prove the assertion when $r=m-1=2s$. Consider the graph $K_{2s+1,2}-K_{2s,1}$ with bipartitions $U=\{u_1,\dots,u_{2s+1}\}$ and $W=\{w_1,w_2\}$, with edge set $\{u_iw_j\colon 1\le i\le 2s,j=1,2\}\cup \{u_{2s+1}w_1\}.$ Assume that there exists a $(4s+1)$-odd graceful labeling $\lambda$ of this graph.

Note that the graph obtained by erasing $u_{2s+1}w_1$ from this graph is isomorphic to $K_{2s,2}$. Hence, from Theorem~\ref{thm:label_complete_bipartite}, we obtain \[
\{\lambda(u_1),\dots,\lambda(u_{2s})\}=\{2,\dots,4s\}, \quad \{\lambda(w_1),\lambda(w_2)\}=\{1,4s+1\}.
\] Now, note that $\lambda(u_{2s+1})$ is even and bigger than $4s$. Hence, $\max(\lambda(U))\ge 4s+2$, which contradicts our assumption and proves~\ref{eqn:2s+1,2}.

Next, when $(m,n)=(3,2s)$ we prove \begin{equation*}
    \chi_{og}(K_{3,2s}-K_{1,r})=\begin{cases}
        4s+1=2m+2n-5 \:& \text{for}\:2\leq r<m,\\
        4s+2=2m+2n-4 \:& \text{for}\:r=1.\\
    \end{cases}
\end{equation*}

With respect to Lemma~\ref{lem:subgraph}, we only need to consider the case $r=1,2$. First, for $r=2$, we consider the graph $K_{3,2s}-K_{1,2}$ with bipartitions $U=\{u_1,u_2,u_3\}$ and $W=\{w_1,\dots,w_{2s}\}$, with edge set $\{u_iw_j\colon i=1,2,3, 1\le j\le 2s\} - \{u_3w_1,u_3w_{2s}\}.$ We construct a $(4s+1)$-odd graceful labeling $\varphi$ of this graph as follows: \begin{equation}\label{eqn:K3,2s-K1,r}
    \varphi(w_i)=2i,\:1\le i\le m,\quad \varphi(u_1)=1,\quad \varphi(u_2)=4s+1, \quad \varphi(u_3)=3.
\end{equation} Proving  this labeling is an odd graceful labeling is similar to~\eqref{eqn:labeling-2s,2}, except that we also need to consider labelings related to $u_3$. In this case, since $\varphi(w_i)>\varphi(u_3)$ for all $i\ge 2$, we see that all edges adjacent to $u_3$ have pairwise different labelings. Next, with respect to the vertices $w_i$ for $i\ge 2$, we need to compare the value of $|\varphi(w_i)-\varphi(u_j)|$ for $j=1,2,3$. We trivially have \[
|\varphi(w_i)-\varphi(u_1)|=2i-1,\quad |\varphi(w_i)-\varphi(u_2)|=4s+1-2i, \quad |\varphi(w_i)-\varphi(u_3)|=2i-3,
\] which are three pairwise different integers for $1<i\le 2s$, except when $i=s+1$, which does not happen since $u_3$ is not adjacent to $w_{s+1}$. This completes the proof of~\eqref{eqn:K3,2s-K1,r} for $r\ge 2$.

For the case $r=1$, we use contradictions and assume $\lambda$ as a $(4s+1)$-odd graceful labeling of this graph. Let the graph has bipartitions defined as in previous argument, only that in this case we have $u_3w_{j}\notin E$, for some $1\le j\le 2s$. In this setup, since the graph made by removing $u_3$ is a $K_{2,2s}$-complete graph, from Theorem~\ref{thm:label_complete_bipartite}, we obtain, without loss of generality, \[
(\lambda(u_1),\lambda(u_2))=(1,4s+1), \quad (\lambda(w_1),\dots,\lambda(w_{2s}))=(2,\dots,4s).
\]
It remains to label $\lambda(u_3)$, which is an odd number less than $4s+1$ but more than $1$. Note that from Lemma~\ref{lem:abcgraceful}, \[
1+\lambda(u_3)\ne \lambda(w_i)=2i \iff \lambda(u_3)\ne 2i-1
\] {for all} $1\le i\le 2s$, $i\ne j$. Therefore, we have $\lambda(u_3)=2j-1$. However, if $j>2$, \[
\lambda(w_{j-2})+\lambda(w_{j+1})=4y-2=2\lambda(u_3),
\] which contradicts Lemma~\ref{lem:abcgraceful}. 
Therefore, $j=2$, which implies  $\lambda(u_3)=3$. However, in this case, we have \[
\lambda(u_3)+\lambda(u_2)=4s+4=2\lambda(w_{s+1}),
\]which contradicts Lemma~\ref{lem:abcgraceful}. Therefore, the initial assumption is incorrect and the proof of~\eqref{eqn:K3,2s-K1,r} is complete.

\subsubsection{The case $(m,n)=(2s+1,2s)$}
 It remains to prove Theorem~\ref{thm:near-complete} for the case $(m,n)=(2s+1,2s)$. In this section, we prove \begin{equation}\label{eqn:2s+1,2s_other}
   \chi_{og}(K_{2s+1,2s}-K_{1,r}) = \begin{cases}
       8s-3=2m+2n-5,\quad \text{when} \quad s\le r\le 2s-1,\:\text{and}\\
       8s-2=2m+2n-4,\quad \text{when} \quad 1\le r\le s-2.
   \end{cases}
   \end{equation}
   In the remaining case $r=s-1$, we prove\begin{equation}\label{eqn:2s+1,2s_s-1}
   \chi_{og}(K_{2s+1,2s}-K_{1,s-1}) = \begin{cases}
       8s-3,\quad \text{when} \: s\ge 5\: \text{is odd, and}\\
       8s-2,\quad \text{when} \: s\: \text{is even or}\:s=3.
   \end{cases}
   \end{equation}

   These equations are established from the following statements, which we will prove in this section:\begin{itemize}
       \item the existence of an $(8s-3)$-odd graceful labeling of $K_{2s+1,2s}-K_{1,s}$ for odd $s\ge 5$ and  $K_{2s+1,2s}-K_{1,s-1}$ for even $s$ and $s=3$ (sufficient from Lemma~\ref{lem:subgraph}),
       \item a non-existence proof for such labeling for the graph $K_{2s+1,2s}-K_{1,s-2}$ for all $s\ge 5$, and \item a non-existence proof for such labeling for the graph $K_{2s+1,2s}-K_{1,s-1}$ for even $s$ and $s=3$.
   \end{itemize}


We first prove the first statement where $s \ne 3$. Let $G = K_{2s+1,2s}-K_{1,r}$ where $r = 2 \lfloor \frac{s}{2} \rfloor$  with bipartitions $W=\{w_1,\dots,w_{2s+1}\}$ and $U=\{u_1,\dots,u_{2s}\}$ such that the graph without $w_{2s+1}$ is isomorphic to $K_{2s,2s}$ and
\begin{align*}
    N(w_{2s+1}) = \{u_1\}\cup \{u_{\lfloor s/2 \rfloor+2},u_{\lfloor s/2 \rfloor+3},\ldots,u_{s+1}\} \cup \{u_{\lfloor\frac{3s}{2}\rfloor+2},u_{\lfloor\frac{3s}{2}\rfloor+3},\ldots,u_{2s}\}.
\end{align*} 
Define a labeling $\psi : V(G) \to [1,8s-3]$ where
\begin{align*}
    \psi(u_i) & = \begin{cases}
        8i - 6, & 1 \le i \le s,\\
        8(i-s) - 4, & s+1 \le i \le 2s.
    \end{cases}\\
    \psi(w_i) & = \begin{cases}
        4i - 3, & 1 \le i \le 2s,\\
        11, & i = 2s+1.
    \end{cases}
\end{align*}
We will show that this is a $(8s-3)$-odd labeling. Observe that $\psi|_X = \varphi$ as defined in \eqref{eqn:labeling-2s,2s} which implies it is sufficient to only show that \begin{align}\label{eqn:psi_ineq_ui}
    |\psi(w_{2s+1}) - \psi(u_i)| \ne |\psi(w_j) - \psi(u_i)|
\end{align}
for $i \in \{1\} \cup [\lfloor s/2 \rfloor+2,s+1]\cup [\lfloor \frac{3s}{2} \rfloor + 2,2s]$, $j \in [1,2s]$ and
\begin{align}\label{eqn:psi_ineq_w2s+1}
    |\psi(w_{2s+1}) - \psi(u_i)| \ne |\psi(w_{2s+1}) - \psi(u_k)|
\end{align}
for $i,k \in \{1\} \cup [\lfloor s/2 \rfloor+2,s+1]\cup [\lfloor \frac{3s}{2} \rfloor + 2,2s]$, $i \ne k$.

First, we will show Equation \eqref{eqn:psi_ineq_ui}. If $i = 1$, then $|\psi(w_{2s+1}) - \psi(u_1)| = |11-2| = 9$. If we assume that $|\psi(w_{2s+1}) - \psi(u_1)| = |\psi(w_j) - \psi(u_1)|$, it follows that \begin{align*}
    |\psi(w_j) - \psi(u_1)| = |\psi(w_j) - 2| = 9.
\end{align*}
Since $\psi(w_j) \ge 2$ for $j \in [1,2s]$, we obtain $\psi(w_j) - 2 = 9$ which implies $\psi(w_j) = 11$. This shows $|\psi(w_{2s+1}) - \psi(u_1)| \ne |\psi(w_j) - \psi(u_1)|$ for every $j \in [1,2s]$. Next, if $i = s+1$ then $|\psi(w_{2s+1}) - \psi(u_{s+1})| = |11-4| = 7$. If $j = 1$ then
\begin{align*}
    |\psi(w_1) - \psi(u_{s+1})| = |1-4| = 3 \ne |\psi(w_{2s+1}) - \psi(u_{s+1})|.    
\end{align*}
Now, for $j \ge 2$ assume that $|\psi(w_{2s+1}) - \psi(u_{s+1})| = |\psi(w_j) - \psi(u_{s+1})|$. Since $\psi(w_j) > 4$ for $j \in [2,2s]$, we have $\psi(w_j) - 4 = 7$ which implies $\psi(w_j) = 11$. This shows that $|\psi(w_{2s+1}) - \psi(u_i)| \ne |\psi(w_j) - \psi(u_i)|$ for $i \in \{1,s+1\}$.

Now, let $i \in [\lfloor s/2 \rfloor+2,s] \cup [\lfloor\frac{3s}{2}\rfloor+2,2s]$. Since 
\begin{align*}
    \psi(u_i) \ge 8(\lfloor s/2\rfloor+2) - 6 > 11 
\end{align*}
for any $i \in [\lfloor s/2 \rfloor+2,s] \cup [\lfloor\frac{3s}{2}\rfloor+2,2s]$, $|\psi(w_{2s+1}) - \psi(u_i)| = \psi(u_i) - 11$. Moreover, if $\psi(w_j) < \psi(u_i)$ then $|\psi(w_j) - \psi(u_i)| = \psi(u_i) - \psi(w_j)$. If we assume that $|\psi(w_{2s+1}) - \psi(u_i)| = |\psi(w_j) - \psi(u_i)|$, then $\psi(u_i) - \psi(w_j) = \psi(u_i) - 11$. This implies $\psi(w_j) = 11$. If $\psi(w_j) \ge \psi(u_i)$, since $\psi(w_j) \le 8s-3$, we obtain
\begin{align}\label{eqn:upper_bound_psi}\begin{split}
    |\psi(w_j) - \psi(u_i)| & = \psi(w_j) - \psi(u_i)\\
    & \le (8s-3) - (8(\lfloor s/2\rfloor+2)-6)\\
    & \le 8(s - \lfloor s/2 \rfloor ) - 13\\
    & \le 4s - 9
\end{split}\end{align}
and
\begin{align}\label{eqn:lower_bound_psi}\begin{split}
    |\psi(u_i) - \psi(w_{2s+1})| & \ge 8(\lfloor s/2\rfloor+2) - 6 - 11\\
    & \ge 8\lfloor s/2 \rfloor - 1\\
    & \ge 4s - 5.
\end{split}\end{align}
By combining Equation \eqref{eqn:upper_bound_psi}-\eqref{eqn:lower_bound_psi}, we have $|\psi(u_{2s+1}) - \psi(w_i)| > |\psi(u_j) - \psi(w_i)|$. This shows Equation \eqref{eqn:psi_ineq_ui}. To show Equation \eqref{eqn:psi_ineq_w2s+1}, observe that
\begin{align}\label{eqn:center_w2s+1}\begin{split}
    &|\psi(w_{2s+1}) - \psi(u_{1})|  =9,\\
    &|\psi(w_{2s+1}) - \psi(u_{s+1})|  =7,\\
    &|\psi(w_{2s+1}) - \psi(u_{i})|  = 8i-17, \quad \text{if }i \in [\lfloor s/2 \rfloor+2,s],\:\text{and}\\
    &|\psi(w_{2s+1}) - \psi(u_{i})|  = 8(i-s)-15,  \quad \text{if }i \in [\lfloor3s/2\rfloor+2,2s].\\
\end{split}\end{align}
If $s = 2$, then $N(w_{2s+1}) = \{u_1,u_{s+1}\}$ and Equation \eqref{eqn:center_w2s+1} implies Equation \eqref{eqn:psi_ineq_w2s+1}. Otherwise, let $s \ge 4$ and let $i \in [\lfloor s/2 \rfloor+2,s] \cup [\lfloor\frac{3s}{2}\rfloor+2,2s]$. From Equation \eqref{eqn:lower_bound_psi}, we obtain $|\psi(w_{2s+1}) - \psi(u_{i})| \ge 4(4) - 5 = 11$. Since
\begin{align*}
   & |\psi(w_{2s+1}) - \psi(u_{i})|  \equiv 3 \pmod 4\quad  \text{if }i \in [\lfloor s/2 \rfloor+2,s] \text{ and}\\
   & |\psi(w_{2s+1}) - \psi(u_{i})|  \equiv 1 \pmod 4  \quad \text{if }i \in [\lfloor3s/2\rfloor+2,2s],
\end{align*}
and $|\psi(w_{2s+1}) - \psi(u_{i})|$ is strictly monotone, it follows that Equation \eqref{eqn:psi_ineq_w2s+1} holds. This shows that $\psi$ is a $(8s-3)$-odd labeling whenever $s \ne 3$. To show the case where $s = 3$ observe the graph $K_{7,6}-K_{1,3}$ along with its $21$-odd graceful coloring in Figure \ref{fig:K76-K13}. This completes the first part of the proof.

\begin{figure}[h!]
    \centering
    \includegraphics[scale=0.6]{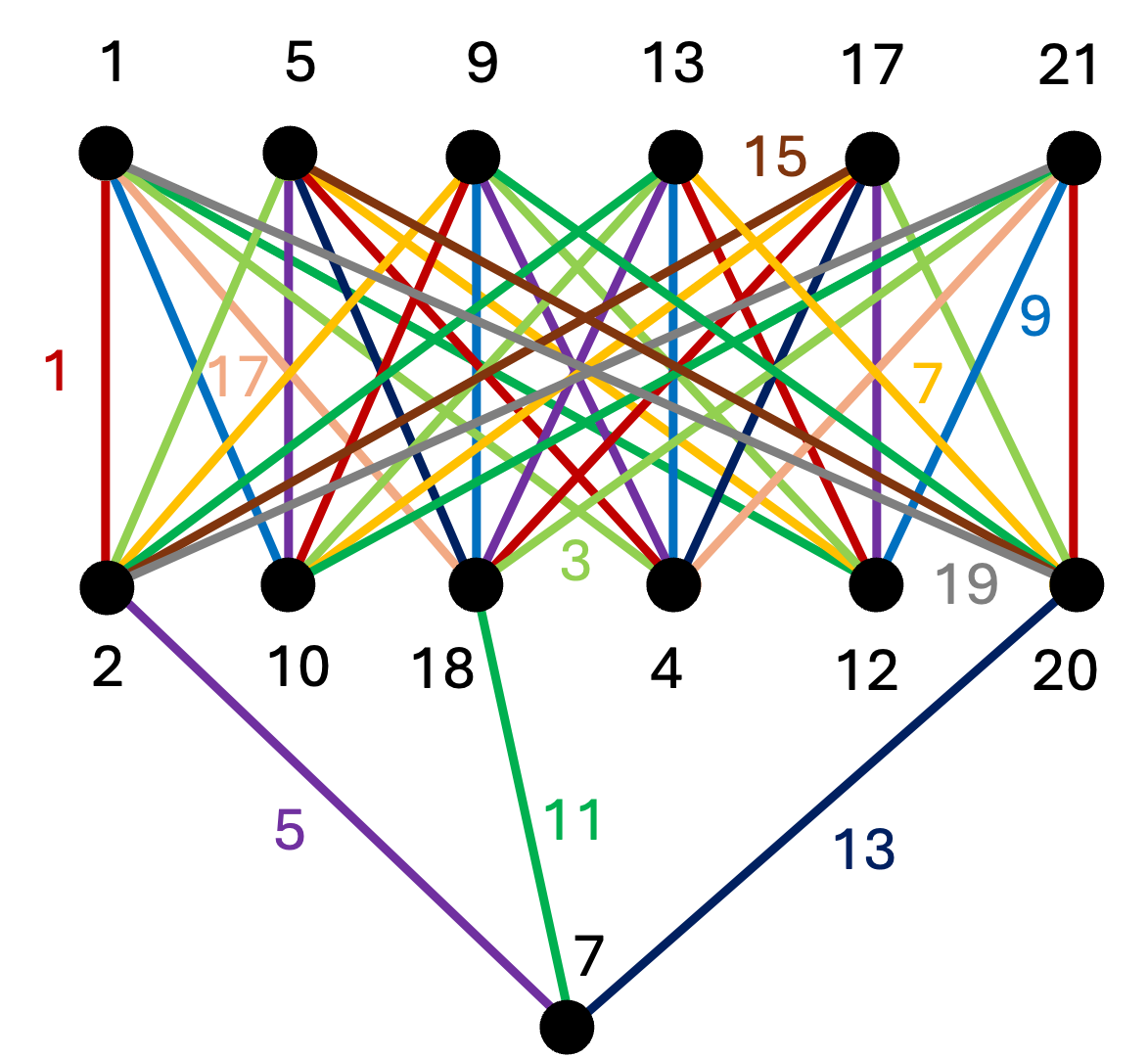}
    \caption{A 21-odd graceful coloring of $K_{7,6}-K_{1,3}$.}
    \label{fig:K76-K13}
\end{figure}

Next, we prove the nonexistence of an $(8s-3)$-odd graceful labeling on the graph $K_{2s+1,2s}-K_{1,r}$ in the cases  mentioned in~\eqref{eqn:2s+1,2s_other} and~\eqref{eqn:2s+1,2s_s-1}.

 Consider the graph $K_{2s+1,2s}-K_{1,r}$ with bipartitions  $U=\{u_1,\dots,u_{2s+1}\}$, $W=\{w_1,\dots,w_{2s}\}$ such that the graph without $u_{2s+1}$ is isomorphic to $K_{2s,2s}$, and suppose there exists an $(8s-3)$-odd graceful coloring $\lambda$ of this graph.

   Note that from Theorem~\ref{thm:label_complete_bipartite}, we obtain either (without loss of generality) \begin{align}\label{eqn:U odd W even}\begin{split}\lambda(u_i) & = 4i - 3, \quad 1 \le i \le 2s.\\
   \lambda(w_i) & = \begin{cases}
        8i - 6, & 1 \le i \le s,\\
        8(i-s) - 4, & s+1 \le i \le 2s,
    \end{cases}
   \end{split}
   \end{align}
   or
   \begin{align}\label{eqn:U even W odd}\begin{split}                       
   \lambda(u_i) & = \begin{cases}
        8i - 6, & 1 \le i \le s,\\
        8(i-s) - 4, & s+1 \le i \le 2s,
    \end{cases}\\
    \lambda(w_i) & = 4i - 3, \quad 1 \le i \le 2s.\\
   \end{split}
   \end{align} It remains to label $u_{2s+1}$. In the next arguments, we prove that if an $(8s-3)$-odd graceful labeling of $K_{2s+1,2s}-K_{1,r}$ exists, 
   \begin{align*}
       \deg u_{2s+1} \le s+1.
   \end{align*} This inequality directly implies $r\ge s-1$, which proves~\eqref{eqn:2s+1,2s_other}.
   
   To prove this, observe that all neighbors of $u_{2s+1}$ are in $W$, and each vertices in $W$ are adjacent to $u_i$, for $1\le i \le n$. Therefore, from Lemma~\ref{lem:abcgraceful}, if $w_j\in N(u_{2s+1})$,  for all $1\le i \le 2s$. \begin{align*}
   \lambda(u_i) \neq 2\lambda(w_j) -\lambda(u_{2s+1}).
   \end{align*}

Consider the first case,~\eqref{eqn:U odd W even}. In this case, note that $\lambda(u_{2s+1})\equiv 3\pmod 4$. Now, let \begin{align}\label{eqn:U odd W even mod 16}
    \lambda(u_{2s+1})=16t-q,\quad 1\le t \le \lceil s/2\rceil,\quad
    q\in \{1,5,9,13\}.
\end{align} Note that all possible values for $\lambda(u_{2s+1})$ are contained in this parametrization.

We now prove the following proposition. \begin{proposition}
    Let $\lambda$ be an odd graceful labeling of $K_{2s+1,2s}-K_{1,r}$ satisfying~\eqref{eqn:U odd W even} and~\eqref{eqn:U odd W even mod 16}. Then, we have $u_{2s+1}w_{j}\notin E(K_{2s+1,2s}-K_{1,r})$ if $\lambda(w_j)$  lies in the following interval, based on the value of $q$: \begin{table}[h]
\begin{tabular}{|c|c|}
\hline
$q$   & \text{$\lambda(w_j)$}           \\ \hline
$1$        & $[8t+2,8t+4s-2]$  \\ \hline
$5$          & $[8t+2,8t+4s-4]$  \\ \hline
$9$         &  $[8t-4,8t+4s-6]$  \\ \hline
$13$        & $[8t-6,8t+4s-8]$\\ \hline
\end{tabular}
\end{table}
\end{proposition}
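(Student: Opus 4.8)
The plan is to apply Lemma~\ref{lem:abcgraceful} to the paths $(u_{2s+1},w_j,u_i)$. Since the graph $K_{2s+1,2s}-K_{1,r}$ is obtained from $K_{2s+1,2s}$ by deleting only edges incident to $u_{2s+1}$, every vertex $u_i$ with $1\le i\le 2s$ remains adjacent to $w_j$. Hence, if $u_{2s+1}w_j$ is an edge, then $(u_{2s+1},w_j,u_i)$ is a path for each $1\le i\le 2s$, and Lemma~\ref{lem:abcgraceful} forces $2\lambda(w_j)\ne\lambda(u_{2s+1})+\lambda(u_i)$, that is,
\[
  2\lambda(w_j)-\lambda(u_{2s+1})\notin\{\lambda(u_1),\dots,\lambda(u_{2s})\}.
\]
By \eqref{eqn:U odd W even}, the right-hand set equals $\{1,5,9,\dots,8s-3\}$, which is precisely the set of integers congruent to $1\pmod 4$ in the interval $[1,8s-3]$. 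So, contrapositively, if $2\lambda(w_j)-\lambda(u_{2s+1})$ is congruent to $1\pmod 4$ and lies in $[1,8s-3]$, then $u_{2s+1}w_j$ is not an edge.

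The key simplification is that the congruence condition holds automatically. Indeed $\lambda(w_j)$ is a label of $W$, hence even, so $2\lambda(w_j)\equiv 0\pmod 4$; and by \eqref{eqn:U odd W even mod 16}, $\lambda(u_{2s+1})=16t-q$ with $q\in\{1,5,9,13\}$, so $2\lambda(w_j)-\lambda(u_{2s+1})\equiv -q\equiv 1\pmod 4$ regardless of which vertex $w_j$ is. Thus the only effective requirement is the range condition $1\le 2\lambda(w_j)-\lambda(u_{2s+1})\le 8s-3$, which rearranges to
\[
  \frac{16t-q+1}{2}\ \le\ \lambda(w_j)\ \le\ \frac{16t-q+8s-3}{2};
\]
whenever $\lambda(w_j)$ satisfies this, $u_{2s+1}w_j\notin E(K_{2s+1,2s}-K_{1,r})$.

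It then remains to substitute $q\in\{1,5,9,13\}$ and simplify. The bounds become $8t-\tfrac{q-1}{2}\le\lambda(w_j)\le 8t+4s-\tfrac{q+3}{2}$, i.e.\ the intervals $[8t,8t+4s-2]$, $[8t-2,8t+4s-4]$, $[8t-4,8t+4s-6]$, $[8t-6,8t+4s-8]$ for $q=1,5,9,13$ respectively. For $q=9$ and $q=13$ these are exactly the intervals in the table. For $q=1$ and $q=5$ the raw lower endpoints are $8t$ and $8t-2$; but by \eqref{eqn:U odd W even} every label of $W$ has the form $8i-6$ or $8i-4$, so no label of $W$ lies in $[8t,8t+1]$ (the $q=1$ case) or in $[8t-2,8t+1]$ (the $q=5$ case), and one may therefore raise the lower endpoint to $8t+2$ without omitting any admissible $w_j$. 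This reproduces the four rows of the table.

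The argument is essentially a single invocation of Lemma~\ref{lem:abcgraceful}, so there is no deep obstacle; the care required is in the parity bookkeeping — above all the observation that $2\lambda(w_j)-\lambda(u_{2s+1})$ is automatically $\equiv 1\pmod4$, which is what collapses the combinatorial nonadjacency condition to the clean interval condition — together with the minor check that the tabulated lower endpoints match the actual label set $\lambda(W)$ rather than the slightly wider intervals coming directly from the inequality.
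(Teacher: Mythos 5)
Your proof is correct and follows essentially the same route as the paper: a single application of Lemma~\ref{lem:abcgraceful} to the paths $(u_{2s+1},w_j,u_i)$, showing that $2\lambda(w_j)-\lambda(u_{2s+1})$ lands in the label set $\{4i-3 : 1\le i\le 2s\}$, the only difference being that you treat all four values of $q$ uniformly via the residue-plus-range criterion while the paper computes the case $q=1$ explicitly and declares the rest similar. (One harmless slip: $2\lambda(w_j)-\lambda(u_{2s+1})\equiv q\pmod 4$, not $-q$, but since $q\equiv 1\pmod 4$ your stated conclusion is still right.)
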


\begin{proof}
    We only prove the case $q=1$. Let $\lambda(w_j)=8t+2p$, for $1\le p\le 2s-1$. Then, \[
    2\lambda(w_j)-\lambda(u_{2s+1})=2(8t+2p)-(16t-1)=4p+1 = \lambda(u_{p+1}),
    \]which completes the proof for this case. The proofs for the other cases of $q$ follow similar arguments.
\end{proof}

However, we note that all numbers in $\lambda(W)$ is either $2$ or $4$ modulo $8$. By counting carefully the numbers in each of the intervals that are $2$ or $4$ modulo $8$ in these intervals, we obtain the following corollary. \begin{corollary}\label{cor:label mod 16}
    Let $\lambda$ be an odd graceful coloring of $K_{2s+1,2s}-K_{1,r}$ satisfying~\eqref{eqn:U odd W even} and~\eqref{eqn:U odd W even mod 16}. Then, the largest possible value of $\deg(u_{2s+1})$ is determined in the following table, based on the value of $q$ and $s\ge 4$.\begin{table}[ht]
\begin{tabular}{|c|c|c|}
\hline
$q\slash s$ & \text{odd}              & \text{even}              \\ \hline
$1$         & $s$ & $s$  \\ \hline
$5$         & $s+1$ & $s$  \\ \hline
$9$         & $s$ & $s$  \\ \hline
$13$        & $s-1$ & $s$ \\ \hline
\end{tabular}
\end{table}
\end{corollary}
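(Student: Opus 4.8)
The plan is to derive the bound directly from the preceding Proposition together with the observation that every element of $\lambda(W)$ is $\equiv 2$ or $4 \pmod 8$. Fix a value of $q \in \{1,5,9,13\}$ and the corresponding "forbidden interval" $I_q$ given in the Proposition, so that if $\lambda(w_j) \in I_q$ then $u_{2s+1} w_j \notin E$. Since $u_{2s+1}$ has $2s$ potential neighbors among $w_1,\dots,w_{2s}$, the number of edges missing at $u_{2s+1}$ is at least the number of elements of $\lambda(W)$ lying inside $I_q$; hence $\deg(u_{2s+1}) \le 2s - \#\{\,j : \lambda(w_j)\in I_q\,\}$. Because $\lambda(W) = \{8i-6 : 1\le i\le s\}\cup\{8i-4 : 1\le i\le s\}$ under~\eqref{eqn:U odd W even}, counting the elements of $\lambda(W)$ in a given interval of integers is a purely arithmetic matter: one counts how many residues $\equiv 2 \pmod 8$ and how many $\equiv 4 \pmod 8$ fall in $I_q$, then intersects with the actual ranges $[2,8s-4]$ and $[4,8s-4]$ for those two residue classes.

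The key steps, in order, are: (i) for each $q$, record the endpoints of $I_q$ from the Proposition's table and note their residues mod $8$; (ii) count $|\,\lambda(W)\cap I_q\,|$ as a function of $t$ and $s$, splitting into the $\equiv 2$ and $\equiv 4 \pmod 8$ subcases; (iii) observe that this count is minimized (hence the missing-edge bound is weakest, and $\deg(u_{2s+1})$ potentially largest) for the extreme admissible values of $t$, namely $t=1$ or $t = \lceil s/2\rceil$ from~\eqref{eqn:U odd W even mod 16}, and also account for the fact that the interval may protrude past the range of $\lambda(W)$ so that part of it is vacuous; (iv) plug in and simplify, separating the parity of $s$, to read off the table entries $s$, $s+1$, $s$, $s-1$ (for odd $s$) and $s,s,s,s$ (for even $s$). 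Step (iii) is where the parity of $s$ enters: the length of $I_q$ is $4s-4$, $4s-6$, $4s-2$, or $4s-2$, and whether this captures $s-1$, $s$, or $s+1$ of the roughly-half-spaced elements of $\lambda(W)$ depends delicately on the alignment of $8t \pm c$ with the grid $8\Z + \{2,4\}$, which shifts with $s$ odd versus even.

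The main obstacle I anticipate is the bookkeeping in step (iii): one must be careful that the count is a genuine upper bound on $\deg(u_{2s+1})$ valid for \emph{all} admissible $t$, not just the worst one, so the argument should show that the displayed value is attained only at a specific $t$ (and is strictly smaller otherwise), and must correctly handle the boundary effects when $I_q$ runs off either end of $[2,8s-4]$. A secondary point of care is that the Proposition was stated only for $q=1$ with the others "by similar arguments"; for the corollary one should at least verify that the interval endpoints for $q=5,9,13$ are consistent with the same type of computation $2\lambda(w_j)-\lambda(u_{2s+1}) = \lambda(u_\ell)$ for some $\ell \in [1,2s]$, so that the forbidden ranges are exactly as tabulated. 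Once the interval data is pinned down, steps (i), (ii) and (iv) are routine integer counting, and the restriction $s\ge 4$ ensures no small-case degeneracies interfere with the residue counts.
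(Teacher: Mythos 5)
Your plan coincides with the paper's proof: the paper likewise bounds $\deg(u_{2s+1})$ by $2s$ minus the number of integers $\equiv 2,4 \pmod 8$ lying in the forbidden interval supplied by the preceding proposition, carried out case by case in $q$ and the parity of $s$ (it writes out only $q=5$, $s$ odd, finding $(s-1)/2+(s-1)/2=s-1$ excluded vertices of $W$, hence $\deg(u_{2s+1})\le s+1$, and declares the other cases analogous). The only simplification available in your step (iii) is that, because both endpoints of $I_q$ have the form $8t+\mathrm{const}$, the residue count is independent of $t$, and the admissibility constraint $16t-q\le 8s-3$ keeps $I_q$ inside the range of $\lambda(W)$, so no boundary truncation actually occurs.
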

\begin{proof}
 Here we only prove the case $q=5$ and odd $s$. Note that the smallest and largest number that are $2$ modulo $8$ and lies inside the interval $[8t+2,8t+4s-4]$ are $8t+2$ and $8t+4s-10$, respectively. Hence, there are $(s-1)/2$ numbers in this interval that are $2$ modulo $8$. 

 Next, the smallest and largest number that are $4$ modulo $8$ and lies inside the interval $[8t+2,8t+4s-4]$ are $8t+4$ and $8t+4s-8$, respectively. Hence, there are $(s-1)/2$ numbers in this interval that are $4$ modulo $8$. 

 These implies there are $s-1$ vertices in $W$ that are not adjacent to $u_{2s+1}$, which implies $\deg(u_{2s+1})\le s+1$. We repeat this argument for other values of $q$ and also for even $s$, which completes the proof.
\end{proof}

Since in this case $\deg(u_{2s+1})\le s+1$ for all $s$, this implies there does not exist an $(8s-3)$-odd graceful labeling $\lambda$ of $K_{2s+1,2s}-K_{1,s-2}$ that satisfies \eqref{eqn:U odd W even} for all $s$. In addition, when $s$ is even, there does not exist an $(8s-3)$-odd graceful labeling $\lambda$ of $K_{2s+1,2s}-K_{1,s-1}$ that satisfies \eqref{eqn:U odd W even}. 

Now, let $\lambda$ be an $(8s-3)$-odd graceful labeling of $K_{2s+1,2s}-K_{1,r}$ satisfying~\eqref{eqn:U even W odd}. In this case, note that $\lambda(u_{2s+1})\equiv 6,0\pmod{8}$. Therefore, we may write \begin{equation*}
    \lambda(u_{2s+1})=8t-q, \quad 1\le t< s,\quad q\in \{0,2\}.
\end{equation*}
In this case, we claim the following proposition.
\begin{proposition}\label{prp:4t+4s-3}
    Let $\lambda$ be an odd graceful labeling of $K_{2s+1,2s}-K_{1,r}$ satisfying~\eqref{eqn:U odd W even} and~\eqref{eqn:U odd W even mod 16}. Then, we have $u_{2s+1}w_{j}\notin E(K_{2s+1,2s}-K_{1,r})$ if $\lambda(w_j)$  lies between the interval $[4t+1,4t+4s-3]$. 

\end{proposition}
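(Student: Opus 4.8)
The plan is to argue by contraposition and reduce everything to a single invocation of Lemma~\ref{lem:abcgraceful}, just as in the proof of the preceding (unnamed) Proposition. Since deleting $u_{2s+1}$ leaves a copy of $K_{2s,2s}$, every $u_i$ with $1\le i\le 2s$ is adjacent to every $w_j$. Hence, if $u_{2s+1}w_j\in E$, then for each such $i$ the triple $(u_i,w_j,u_{2s+1})$ is a path of length three, and Lemma~\ref{lem:abcgraceful} gives $2\lambda(w_j)\ne\lambda(u_i)+\lambda(u_{2s+1})$, that is, $\lambda(u_i)\ne 2\lambda(w_j)-\lambda(u_{2s+1})$. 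Reading this contrapositively: to conclude $u_{2s+1}w_j\notin E$ it suffices to exhibit a single index $i\in[1,2s]$ with $\lambda(u_i)=2\lambda(w_j)-\lambda(u_{2s+1})$. This reduction is valid for any labeling of the stated shape and is where I would start.

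Next I would specialize to the labeling and parametrization actually in force for this proposition. The interval $[4t+1,4t+4s-3]$ and the parameter $t$ only match the setup of the paragraph immediately preceding the statement, so I read the hypotheses as \eqref{eqn:U even W odd} together with $\lambda(u_{2s+1})=8t-q$, $q\in\{0,2\}$ (the printed references to \eqref{eqn:U odd W even} and \eqref{eqn:U odd W even mod 16} are a transcription slip for these: under the printed data the forbidden range would begin near $8t$ rather than near $4t$, and moreover $[4t+1,4t+4s-3]$ is exactly the $\lambda$-image of $\{w_{t+1},\dots,w_{t+s}\}$ under \eqref{eqn:U even W odd}). With $\lambda(w_j)=4j-3$ the computation is immediate,
\[
2\lambda(w_j)-\lambda(u_{2s+1}) = 2(4j-3)-(8t-q) = 8(j-t)-6+q,
\]
and the whole proposition reduces to reading off when this value is a genuine $U$-label.

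The key step is then a two-line case split on $q$. If $q=0$ the quantity equals $8(j-t)-6$, which is $\lambda(u_{j-t})$ exactly when $1\le j-t\le s$; if $q=2$ it equals $8(j-t)-4$, which is $\lambda(u_{(j-t)+s})$ exactly when $s+1\le(j-t)+s\le 2s$, i.e.\ again when $1\le j-t\le s$. In both cases the admissibility condition is $t+1\le j\le t+s$, equivalently $\lambda(w_j)=4j-3\in[4t+1,4t+4s-3]$. Thus for every $w_j$ whose label lies in the stated interval, $2\lambda(w_j)-\lambda(u_{2s+1})$ hits a true label $\lambda(u_i)$ with $1\le i\le 2s$, which by the reduction forces $u_{2s+1}w_j\notin E$.

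The part I expect to need the most care — though it remains routine — is the block bookkeeping that keeps the produced index inside its correct half of $U$: for $q=0$ the target must land in the block $\{u_1,\dots,u_s\}$ (the labels $\equiv 2\pmod 8$) and for $q=2$ in the block $\{u_{s+1},\dots,u_{2s}\}$ (the labels $\equiv 4\pmod 8$), and it is precisely the two endpoints $j=t+1$ and $j=t+s$ of the interval that correspond to the extreme indices of these blocks. Checking that $1\le t<s$ guarantees $t+1\ge 2$ and $t+s\le 2s$, so each such $j$ is a legitimate $W$-index in $[1,2s]$, completes the verification; no additive-combinatorics input beyond Lemma~\ref{lem:abcgraceful} is needed.
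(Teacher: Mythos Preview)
Your proposal is correct and is essentially the same argument as the paper's own proof: both compute $2\lambda(w_j)-\lambda(u_{2s+1})=8p+q-6$ (with $p=j-t$), observe that this is a genuine $U$-label for $1\le p\le s$, and invoke Lemma~\ref{lem:abcgraceful}. You have also correctly diagnosed the transcription slips in the statement (the references should be to \eqref{eqn:U even W odd} and to $\lambda(u_{2s+1})=8t-q$, $q\in\{0,2\}$) and in the paper's proof (where ``$\lambda(w_{j'})$'' should read ``$\lambda(u_{j'})$''); your version is in fact the cleaner of the two.
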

\begin{proof}
    Let $\lambda(w_j)=4t+4p-3$, for $1\le p \le s$. Then, \[
    2\lambda(w_j)-\lambda(u_{2s+1})=2(4t+4p-3)-(8t-q)=8p+q-6.
    \] Since $q$ is either $0$ or $2$, this implies the existence of an index $j'$ with $\lambda(w_{j'})=8p+q-6$. Therefore, $u_{2s+1}w_j\notin E(K_{2s+1,2s}-K_{1,r})$, which completes the proof. 
\end{proof}
There are exactly $s$ integers that are $1$ modulo $4$ in the interval. Hence, \begin{equation}\label{eqn:u(2s+1) even}
    \deg(u_{2s+1})\le s
\end{equation} in this labeling case, which implies there are no $(8s-3)$-odd graceful labeling $\lambda$ of $K_{2s+1,2s}-K_{1,s-1}$ that satisfies~\eqref{eqn:U even W odd}.

Therefore, from Proposition~\ref{cor:label mod 16} and~\ref{eqn:u(2s+1) even}, we obtain for $s\ge 4$, \begin{align*}
    &\chi_{og}(K_{2s+1,2s}-K_{1,s-2})=8s-2,\\
    &\chi_{og}(K_{2s+1,2s}-K_{1,s-1}) = \begin{cases}
       8s-3,\quad \text{when} \: s\ge 5\: \text{is odd, and}\\
       8s-2,\quad \text{when} \: s\: \text{is even}.
       \end{cases}
\end{align*}

It remains to compute $\chi_{og}(K_{2s+1,2s}-K_{1,s-1})$ for $s=3$. Assume that an $(8s-3)=21$-odd graceful labeling exist. From Corollary~\ref{cor:label mod 16}, note that $\lambda(u_7)=11$, and $u_7$ is adjacent to exactly four vertices of $W$, with \[
\lambda(W)=\{2,4,10,12,18,20\}.
\]However, Proposition~\ref{prp:4t+4s-3} implies $u_7$ is not adjacent to vertices with label $10$ and $12$. Therefore, $u_7$ is adjacent to vertices whose labels are $2,4,18,20$. In particular, we recall $\lambda(w_1)=2$, $\lambda(w_6)=20$. However \[
\lambda(w_1)+\lambda(w_6)=22=2\lambda(u_7),
\] which contradicts Lemma~\ref{lem:abcgraceful}. Therefore, \[
\chi_{og}(K_{7,6}-K_{1,2})=22,
\] which completes our classification. This completes the proof of~\eqref{eqn:2s+1,2s_other} and~\eqref{eqn:2s+1,2s_s-1}, which also completes the proof of Theorem~\ref{thm:near-complete}.

\end{document}